\title{Derived autoequivalences of generalised Kummer varieties}
\author{Ciaran Meachan}
\address{Mathematisches Institut, Universit\"{a}t Bonn, Deutschland}
\email{meachan@math.uni-bonn.de}
\begin{document}

\begin{abstract}
 In this article, we construct new derived autoequivalences of generalised Kummer varieties. Together with Huybrechts-Thomas twists around $\bbP^n$-objects, these are the only known examples of such symmetries.
\end{abstract}

\maketitle
{\let\thefootnote\relax\footnotetext{This work was supported by the SFB/TR 45 `Periods, Moduli Spaces and Arithmetic of Algebraic Varieties' of the DFG (German Research Foundation).}}

\section*{Introduction}
Let $\cD(X)$ be the bounded derived category of coherent sheaves on a smooth complex projective variety $X$ and $\Aut(\cD(X))$ be the group of isomorphism classes of exact $\bbC$-linear autoequivalences of $\cD(X)$. Then we have a subgroup of \emph{standard} autoequivalences: \[\Aut(X)\ltimes(\Pic(X)\oplus\bbZ)\subset \Aut(\cD(X))\] generated by push forwards along automorphisms of $X$, twists by line bundles and shifts; the complement of this subgroup should be thought of as \emph{hidden symmetries}. 

In their seminal paper, Bondal and Orlov \cite[Theorem 3.1]{bondal2001reconstruction} showed that if the canonical bundle $\omega_X$ (or its inverse) is ample then the standard autoequivalences are everything, i.e. $\Aut(\cD(X))\simeq\Aut(X)\ltimes(\Pic(X)\oplus\bbZ)$. However, finding a complete description of this group when the canonical bundle is not ample is a much more subtle question and one of considerable interest. 
The case where the canonical bundle is trivial is particularly rich.

The classical decomposition theorem of Beauville \cite[Th\'eor\`eme 1]{beauville1983varietes} tells us that there are essentially three fundamental classes of varieties with trivial canonical bundle: abelian varieties, Calabi-Yau varieties and hyperk\"ahler varieties. When $X$ is an abelian variety, 
Orlov \cite[Theorem 4.14]{orlov2002derived} has given a complete description of $\Aut(\cD(X))$ but the other cases remain wide open. Despite not having a complete description of $\Aut(\cD(X))$ in the Calabi-Yau case, we do have examples of non-standard autoequivalences thanks to the pioneering work of Seidel and Thomas \cite{seidel2001braid} on \emph{spherical objects}. For example, the structure sheaf of a $(-1,-1)$-curve inside a Calabi-Yau threefold gives rise to an autoequivalence, called the spherical twist. It turns out that this can also be described as doing the Atiyah-flop equivalence \cite[Theorem 3.6]{bondal1995semiorthogonal} twice; for generalisations of this to $(-2,0)$-curves and $(-3,1)$-curves, see \cite[Theorem 3.1]{toda2007on} and \cite[Theorem 1.9]{donovan2013noncommutative} respectively. 

Finally, for hyperk\"ahler varieties, 
much less is known. For a K3 surface, which is the simplest hyperk\"ahler variety, we have a conjectural description of $\Aut(\cD(X))$ due to Bridgeland \cite[Conjecture 1.2]{bridgeland2008stability} phrased in terms of stability conditions; roughly speaking, the expectation is that $\Aut(\cD(X))$ should be generated by standard autoequivalences and spherical twists
; see \cite[Theorems 1.3 \& 1.4]{bayer2013derived} for the Picard rank one case. For higher dimensional hyperk\"ahler varieties, Huybrechts and Thomas \cite{huybrechts2006pobjects} generalised the notion of spherical objects to that of $\bbP^n$-objects. For example, the structure sheaf of an embedded $\bbP^n$ inside a hyperk\"ahler variety of dimension $2n$ gives rise to an autoequivalence, called the $\bbP^n$-twist. It is expected that there is a strong connection between this twist and doing the Mukai-flop equivalence \cite[Corollary 5.7]{kawamata2002dequivalence} \cite[Theorem 4.4]{namikawa2003mukai} twice. To date, Huybrechts-Thomas twists associated to the structure sheaf of the variety are still the only non-standard autoequivalences which can be associated to an arbitrary hyperk\"ahler. Recently, Addington \cite{addington2011new} further generalised this concept to $\bbP^n$-functors and constructed derived autoequivalences for Hilbert schemes of points on K3 surfaces which were evidently not equal to any of the known ones.

The aim of this paper is to extend this discovery and show that we also get new derived autoequivalences for generalised Kummer varieties.

\subsection*{Summary of main results}
 The generalised Kummer variety $K_n$ associated to an abelian surface $A$ is, by definition, the fibre of the Albanese map $m:A^{[n+1]}\ra A$ over zero where $A^{[n+1]}$ is the Hilbert scheme of $n+1$ points on $A$. 
 In particular, the 
 natural family $\cI$ of ideal sheaves on $A\times K_n$ gives rise to a Fourier-Mukai functor $F_K:\cD(A)\ra\cD(K_n)$. For all $n>1$, we show that $F_K$ is a $\bbP^{n-1}$-functor in the sense of Addington \cite[Section 3]{addington2011new}. That is, if $R_K$ denotes the right adjoint to $F_K$ then the kernel of the composition $R_KF_K$ is given by $\bigoplus_{i=0}^{n-1}\cO_\Delta[-2i]$ and the monad structure $R_KF_KR_KF_K\srel{\mu}{\ra}R_KF_K$ closely resembles multiplication in the graded ring $H^*(\bbP^{n-1},\bbC)$. Addington's $\bbP^n$-functors are a generalisation Huybrechts and Thomas' $\bbP^n$-objects \cite{huybrechts2006pobjects} and as such, they determine autoequivalences of the codomain category. For instance, when $A$ is an abelian surface and $F_K$ is the functor described above, a double cone construction produces a non-trivial element $P_{F_K}\in\Aut(\cD(K_n))$ which acts on $\im F_K$ by $[2-2n]$ and $(\im F_K)^\perp$ by the identity; the induced action on cohomology is trivial. In section \ref{genkum}, we prove our main
 \begin{thm*}[\ref{kummer}]
  $F_K$ is a $\bbP^{n-1}$-functor for all $n>1$. In particular, we have a new derived autoequivalence \[P_{F_K}:=\cone(\cone(F_KR_K[-2]\to F_KR_K)\to\id_{K_n})\in\Aut(\cD(K_n)).\]
\end{thm*}

 The key observation in proving this result 
 is the following
 \begin{thm*}[\ref{Albanese}]
  Let $m:A^{[n]}\ra A$ be the Albanese map. Then $m^*:\cD(A)\to\cD(A^{[n]})$ is a $\bbP^{n-1}$-functor. Thus, we generate new derived autoequivalences of the Hilbert scheme $A^{[n]}$ as well. 
 \end{thm*}

In section \ref{equiv}, we adopt an equivariant approach and study tautological objects on the generalised Kummer variety. In particular, if $F_K'':\cD(A)\to\cD(K_n)$ denotes the Fourier-Mukai transform induced by the structure sheaf of the universal subscheme inside $A\times K_n$ then we have 
 \begin{thm*}[\ref{extformulakummer}]
 Let $A$ be an abelian surface and consider the tautological objects $F_K''(\cE), F_K''(\cF)\in\cD(K_n)$ associated to $\cE,\cF\in\cD(A)$. Then we have the following natural isomorphism of graded vector spaces
 \begin{eqnarray*}\Ext^*(F_K''(\cE),F_K''(\cF)) &\simeq& \Ext^*(\cE,\cF)\otimes H^*(K_{n-1},\cO_{K_{n-1}})\\ && \oplus\; H^*(A,\cE^\vee)\otimes H^*(A,\cF) \otimes H^*(K_{n-2},\cO_{K_{n-2}}).
 \end{eqnarray*}
\end{thm*}

Finally, if $i:K_n\hra A^{[n+1]}$ denotes the inclusion, $j:N\hra A^{n+1}$ is the locus of points which sum to zero, and $D$ is the union of pairwise diagonals in $A\times A^{n+1}$, then we obtain the following
\begin{thm*}[\ref{BKRtype}]
There is a `BKR-type' equivalence $\Psi_K:\cD(K_n)\xra\sim\cD^{\frS_{n+1}}(N)$ which naturally intertwines with $\Psi:\cD(A^{[n+1]})\xra\sim\cD^{\frS_{n+1}}(A^{n+1})$. That is, we have 
\[\Psi_K\circ i^*\simeq j^*\circ\Psi.\]
In particular, this provides us with the identities $\Psi_KF_K'\simeq\Phi_{\cO_{A\times N}}$, $\Psi_KF_K''\simeq\Phi_{\bar{D}}$ as well as an $\frS_{n+1}$-equivariant resolution $\bar{\cK}^\bullet$ of $\bar{D}:=(\id_A\times j)^{-1}D$ given by \[0\to\cO_{\bar{D}}\to\bigoplus_{i=1}^{n+1}\cO_{\bar{D}_i}\to\cdots\ra\bigoplus_{|I|=k}\cO_{\bar{D}_I}\to\cdots\to\cO_{\bar{D}_{\{1,\ldots,n+1\}}}\to0\] where $\bar{D}_I:=(\id_A\times j)^{-1}(D_I)$ are the restricted diagonals.
 \end{thm*}
 
 As a byproduct of this approach, we provide alternative proofs of Theorem \ref{kummer} above and \cite[Theorem 2]{addington2011new}. 
\vs

 \textbf{Acknowledgements}: The author is profoundly grateful to Daniel Huybrechts; not only for the invitation to Bonn but also for all his valuable help and support. Special thanks to Nicolas Addington for patiently explaining the details of \cite{addington2011new}; Andreas Krug for generously donating his time to discussions regarding applications of \cite{krug2014extension}; Arend Bayer together with Will Donovan for numerous technical conversations; and Eyal Markman \& Sukhendu Mehrotra for sharing a preliminary version of \cite{markman2011integral}. The referees have been extremely thorough and their suggestions have improved the paper considerably. 
\vs

\section{$\bbP$-functors}\label{defs}
\begin{dfn}
 An exact functor $F:\cA\ra\cB$ between triangulated categories with left and right adjoints $L,R:\cB\ra\cA$ is a $\bbP^n$-functor if the following conditions are satisfied:
\begin{itemize}
 \item[(i)] There is an autoequivalence $H$ of $\cA$ such that $$RF\simeq \id\oplus H\oplus H^2\oplus\cdots\oplus H^n$$
 \item[(ii)] The map $HRF\hra RFRF\srel{R\epsilon F}{\lra} RF$, when written in components $$H\oplus H^2\oplus\cdots\oplus H^n\oplus H^{n+1}\ra \id\oplus H\oplus H^2\oplus\cdots\oplus H^n,$$ is of the form
 $$\left(\begin{matrix}\ast & \ast & \cdots & \ast & \ast\\ 1 & \ast & \cdots & \ast & \ast\\ 0 & 1 & \cdots & \ast & \ast\\ \vdots & \vdots & \ddots & \vdots & \vdots\\ 0 & 0 & \cdots & 1 & \ast\end{matrix}\right)$$
 \item[(iii)] $R\simeq H^n L$. If $\cA$ and $\cB$ have Serre functors, this condition is equivalent to $S_\cB F H^n\simeq FS_\cA$.
\end{itemize}
\end{dfn}

\begin{thm}[{\cite[Theorem 3]{addington2011new}}]
 If $F$ is a $\bbP^n$-functor then \[P_F:=\cone(\cone(FHR\srel{f}{\ra}FR)
 {\ra}\id_\cB)\] is an equivalence where $f$ is the composition $FHR\hra FRFR\xra{\epsilon FR-FR\epsilon} FR$.
\end{thm}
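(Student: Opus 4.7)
The plan is to exploit the $\bbP^n$-structure to analyse $P_F$ on the two subcategories $\ker R$ and $\im F$ separately, and then glue. Condition (i) forces $R\epsilon:RFR\to R$ to be split surjective (split by $\eta R$ via the $\id$-summand of $RF\simeq\bigoplus_i H^i$), so for every $b\in\cB$ the counit triangle $FR(b)\to b\to C(b)$ has $R(C(b))=0$, placing $C(b)\in\ker R$. Thus $\cB$ is generated by $\im F$ together with $\ker R$, and it suffices to check that $P_F$ is an autoequivalence on each piece. On $\ker R$ this is immediate: if $R(b)=0$ then $FR(b)=FHR(b)=0$, the inner cone $Q(b)$ vanishes, and $P_F(b)\simeq b$.

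The main step is to compute $P_F\circ F$. Substituting $F(a)$ into the defining double cone and applying condition (i) yields $FRF(a)\simeq\bigoplus_{i=0}^n FH^i(a)$ and $FHRF(a)\simeq\bigoplus_{i=1}^{n+1}FH^i(a)$. Unwinding $f=\epsilon FR-FR\epsilon$ and invoking condition (ii), the induced morphism $fF$ takes the form of a staircase whose subdiagonal $1$'s pair the $i$-th source summand to the $(i-1)$-st target summand for $i=1,\ldots,n$. A cone-of-staircase computation then produces $\cone(fF)(a)\simeq F(a)\oplus FH^{n+1}(a)[1]$, with $F(a)$ capturing the unmatched ``$\id$'' in the target and $FH^{n+1}(a)[1]$ the unmatched ``$H^{n+1}$'' in the source. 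Because $\epsilon\circ(\epsilon FR-FR\epsilon)=0$ by naturality of $\epsilon$, the counit extends to a canonical map $Q\to\id$, and one checks that on $Q(F(a))$ this map is the identity on the $F(a)$ summand and zero on $FH^{n+1}(a)[1]$. Hence $P_F(F(a))\simeq FH^{n+1}(a)[2]$, and since $H$ is an autoequivalence of $\cA$ this exhibits $P_F$ as an autoequivalence of $\im F$. As a sanity check, for a $\bbP^n$-object with $H=[-2]$, this recovers the known identity $P_E(E)\simeq E[-2n]$.

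Finally, condition (iii) $R\simeq H^nL$ guarantees that the mirror construction using the left adjoint $L$ supplies a genuine quasi-inverse to $P_F$, so the two pieces above glue to a global autoequivalence of $\cB$. The main obstacle I anticipate is the cone computation in the middle step: the staircase from condition (ii) carries off-diagonal $\ast$-entries that could in principle obstruct the collapse to $F(a)\oplus FH^{n+1}(a)[1]$, so a careful homotopy-level argument --- for instance a totalisation-and-truncation filtration on the matrix --- is needed to show these corrections are irrelevant in the final cone. One must also trace the natural transformation $Q\to\id$ through the identifications carefully to verify the $(\id,0)$-splitting claim above; this is where the bulk of the delicate bookkeeping lives.
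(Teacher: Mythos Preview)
The paper does not prove this theorem at all: it is quoted verbatim as \cite[Theorem 3]{addington2011new} and no argument is supplied. So there is no in-paper proof to compare your proposal against.

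That said, your proposal contains a genuine error in the very first reduction step. From the zig-zag identity you correctly observe that $R\epsilon:RFR\to R$ is split by $\eta R$, but split surjectivity of $R\epsilon$ does \emph{not} force $R(\cone(\epsilon_b))=0$. Applying $R$ to the triangle $FR(b)\xrightarrow{\epsilon_b} b\to C(b)$ and using the splitting gives
\[
RFR(b)\;\simeq\; R(b)\,\oplus\, R(C(b))[-1],
\]
so $R(C(b))[-1]$ is the complement of the $\id$-summand inside $RFR(b)\simeq\bigoplus_{i=0}^n H^iR(b)$, namely $\bigoplus_{i=1}^n H^iR(b)$. This is nonzero whenever $R(b)\neq 0$ and $n\geq 1$. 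Hence $C(b)\notin\ker R$ in general, and the triangle $FR(b)\to b\to C(b)$ does \emph{not} exhibit $\cB$ as generated by $\im F$ together with $\ker R$ in the way you claim.

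The overall shape of your strategy can be rescued, but not by this decomposition. What is true is that $\im F\cup\ker R$ is a \emph{spanning class}: if $b$ is right-orthogonal to $\im F$ then $R(b)=0$ by adjunction, and any $b\in\ker R$ right-orthogonal to $\ker R$ is zero. One then proves $P_F$ is fully faithful by checking $\Hom$'s on this spanning class (your computations of $P_F$ on $\ker R$ and of $P_FF\simeq FH^{n+1}[2]$ are the right ingredients here, modulo the bookkeeping you already flag), and invokes condition (iii) to produce a left adjoint and conclude equivalence. Your phrasing ``check that $P_F$ is an autoequivalence on each piece and then glue'' elides exactly this: morphisms between the two pieces must be handled, and that is what the spanning-class/fully-faithful argument does.
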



\begin{exas}\label{examples}
\begin{enumerate}
 \item Let $S$ be a smooth projective K3 surface and consider the natural functor $F:\cD(S)\ra\cD(S^{[n]})$ induced by the universal ideal sheaf $\cI$ on $S\times S^{[n]}$. Then $F$ is a $\bbP^{n-1}$-functor with $RF\simeq\id\oplus[-2]\oplus\cdots\oplus[2-2n]$ and $H\simeq[-2]$ \cite[Theorem 2]{addington2011new}. It was precisely this example which inspired the author to consider Beauville's \cite{beauville1983varietes} other infinite family of irreducible holomorphic symplectic manifolds: the generalised Kummer variety.
 \item A split spherical functor $F:\cA\ra\cB$ is one where the exact triangle $\id_\cA\srel{\eta}{\ra}RF\ra C$ is split, i.e. $RF\simeq \id_\cA\oplus C$. 
 In other words, a split spherical functor is a $\bbP^1$-functor with $H\simeq C$. Just as in \cite[Proposition 2.9]{huybrechts2006pobjects}, the $\bbP^1$-twist $P_F$ associated to a split spherical functor is equivalent to the square of the spherical twist $T:=\cone(FR\srel{\epsilon}{\ra}\id_\cB)$. See \cite[p.37]{addington2011new}.
\end{enumerate}
\end{exas}

\section{Nested Hilbert Schemes}
 The key results needed for the calculation of $RF$ in the Hilbert scheme setting come from Ellingsrud and Str\o mme's work \cite{ellingsrud1998an} on nested Hilbert schemes on smooth projective surfaces. Let us consider the following diagram:
 $$\xymatrix{A^{[n,n+1]} \ar[r]^-g \ar[d]_-{q\times f} & A^{[n+1]}\\ A\times A^{[n]} &}$$
 where $A^{[n,n+1]}:=\{(\zeta,\xi)\in A^{[n]}\times A^{[n+1]}\;|\;\zeta\subset\xi\}$ is the incidence variety, $$g:A^{[n,n+1]} \ra A^{[n+1]}\;;\;(\zeta,\xi)\mapsto \xi\quad\trm{and}\quad f:A^{[n,n+1]} \ra A^{[n]}\;;\;(\zeta,\xi)\mapsto \zeta$$ are the natural maps induced by the projections and $$q:A^{[n,n+1]} \ra A\;;\;(\zeta, \xi)\mapsto\xi\backslash\zeta:= \Supp(\ker(\cO_\xi\ra\cO_\zeta))$$ maps a pair of subschemes to the point where they differ.

\begin{prop}\cite[Proposition 2.1 \& 2.2]{ellingsrud1998an}
\begin{itemize}
 \item[(i)] The map $g:A^{[n,n+1]}\ra A^{[n+1]}$ factors naturally over the universal subscheme $\cZ_{n+1}\subset A\times A^{[n+1]}$ as $g=\pi_{n+1}\circ\psi$ where $\pi_{n+1}:\cZ_{n+1}\ra A^{[n+1]}$ is the restriction of the projection and $\psi:A^{[n,n+1]}\ra\cZ_{n+1}$ is canonically isomorphic to $\bbP(\omega_{\cZ_{n+1}})$. In particular, $\psi$ is birational and an isomorphism over the set $\{(x,\xi)\in\cZ_{n+1}:\xi\trm{ is a local complete intersection at }x\}$
 , and $g$ is generically finite of degree $n+1$. 
 \item[(ii)] The map $q\times f:A^{[n,n+1]}\ra A\times A^{[n]}$ is canonically isomorphic to the blowup of $A\times A^{[n]}$ along $\cZ_{n}$. In particular, over $\cZ_n'$, the map $q\times f$ is a $\bbP^1$-bundle.
\end{itemize}
\end{prop}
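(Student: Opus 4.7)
Both parts describe the geometry of the nested Hilbert scheme $A^{[n,n+1]}$ from two different angles; the plan is to verify the two universal properties in turn.

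For (i), the map $\psi:A^{[n,n+1]}\to\cZ_{n+1}$ sends a pair $(\zeta,\xi)$ to $(q(\zeta,\xi),\xi)$, which is well defined because $\cI_\zeta/\cI_\xi$ is a length-one sheaf supported at a single closed point of $\xi$. Since $\pi_{n+1}:\cZ_{n+1}\to A^{[n+1]}$ is flat and finite with fibre $\cO_\xi$, the scheme-theoretic fibre of $\psi$ over $(x,\xi)\in\cZ_{n+1}$ parametrises length-one quotients of $\cO_\xi$ supported at $x$. By local Serre duality such quotients correspond to lines in $\omega_{\xi,x}$, and globalising this identification via relative Grothendieck duality for $\pi_{n+1}$, together with the triviality of $\omega_A$, yields $A^{[n,n+1]}\simeq\bbP(\omega_{\cZ_{n+1}/A^{[n+1]}})\simeq\bbP(\omega_{\cZ_{n+1}})$. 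On the locally complete intersection locus $\omega_\xi$ is invertible, so the projectivisation is trivial and $\psi$ is an isomorphism there; since a generic $\xi\in A^{[n+1]}$ is reduced with $n+1$ points, $g$ is generically finite of degree $n+1$.

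For (ii), the strategy is to check the universal property of the blowup of $A\times A^{[n]}$ along $\cZ_n$ for the map $q\times f$. The preimage of $\cZ_n$ consists of those pairs $(\zeta,\xi)$ with $q(\zeta,\xi)\in\zeta$, and one verifies by a local calculation in the standard affine charts of $A^{[n]}$ that this locus is a Cartier divisor; away from $\cZ_n$, the map $q\times f$ is an isomorphism because $\xi=\zeta\sqcup\{x\}$ is uniquely determined. Over the smooth locus $\cZ_n'$, the fibre parametrises the possible tangent directions in which to fatten $\zeta$ at $x$, yielding a $\bbP^1$-bundle. By the universal property of blowing up, $q\times f$ factors through the blowup via a morphism that is bijective on closed points; smoothness of $A^{[n,n+1]}$ together with the smoothness of the blowup over $\cZ_n'$ upgrades this to an isomorphism, and a formal-neighbourhood computation handles the non-l.c.i.\ locus of $\cZ_n$.

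The principal obstacle in (i) is promoting the Serre-duality identification from a fibrewise statement to a scheme-theoretic isomorphism in families, which amounts to verifying base-change compatibility for Grothendieck duality along the finite flat morphism $\pi_{n+1}$. In (ii), the delicate point is establishing the Cartier property of the exceptional divisor, which is precisely what forces the $\bbP^1$-bundle structure and is most cleanly dealt with via the explicit local description of $\cZ_n$ employed by Ellingsrud and Str\o mme.
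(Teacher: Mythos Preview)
The paper does not prove this proposition at all: it is stated with a citation to Ellingsrud and Str\o mme \cite[Propositions 2.1 \& 2.2]{ellingsrud1998an} and then used as a black box, so there is no in-paper argument to compare your proposal against. Your sketch is broadly in the spirit of the original Ellingsrud--Str\o mme arguments (identifying fibres of $\psi$ with lines in the dualising sheaf, and checking the universal property of the blowup for $q\times f$), but since the present paper simply imports the result, any detailed write-up here would go beyond what the paper itself supplies.
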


 For any point $(\zeta,\xi)\in A^{[n,n+1]}$ we have two natural short exact sequences on $A$: $$0\ra\cI_\xi\ra\cI_\zeta\ra\cO_{\xi\setminus\zeta}\ra0\quad \trm{and}\quad 0\ra\cO_{\xi\setminus\zeta}\ra\cO_\xi\ra\cO_\zeta\ra0.$$ Using these, we see that the fibre of $(q\times g)$ over a point $(x,\xi)\in A\times A^{[n+1]}$ is the projective space $\bbP\Hom(\cO_x,\cO_\xi)^*$ and the fibre of $(q\times f)$ over a point $(x,\zeta)\in A\times A^{[n]}$ is the projective space $\bbP\Hom(\cI_\zeta,\cO_x)^*\simeq\bbP(\cI_\zeta|_x)$. In particular, we have $$(q\times g)_*\cO_{A^{[n,n+1]}}\simeq\cO_{\cZ_{n+1}}\quad \trm{and}\quad (q\times f)_*\cO_{A^{[n,n+1]}}\simeq\cO_{A\times A^{[n]}}.$$ It can also be shown that the exceptional divisor $$E:=(q\times f)^{-1}(\cZ_{n+1})=\{(\zeta,\xi)\in A^{[n,n+1]}\;|\;(\xi\backslash\zeta)\subset\zeta\}$$ is irreducible \cite[Section 3]{ellingsrud1998an}, $\omega_{A^{[n,n+1]}}\simeq\cO(E)$, 
 $(q\times f)_*\cO(E)\simeq\cO_{A\times A^{[n]}}$, 
 $(q\times f)_*\cO_E(E)=0$ and $q$ is a submersion \cite[Section 2.1]{addington2011new}.

\section{Hilbert Schemes of Points on an Abelian Surface}\label{HilbA}
 The Hilbert scheme $A^{[n+1]}$ of $n+1$ points on an abelian surface $A$ can be thought of as a fine moduli space of ideal sheaves on $A$ with trivial determinant. In particular, the structure sequence $0\ra\cI_{\cZ_{n+1}}\ra \cO_{A\times A^{[n+1]}}\ra \cO_{\cZ_{n+1}}\ra0$ for the universal subscheme $\cZ_{n+1}\subset A\times A^{[n+1]}$ gives rise to a sequence of Fourier-Mukai functors $$F\quad F'\quad F''\;:\;\cD(A)\ra\cD(A^{[n+1]})$$ whose right adjoints are denoted by $R,R',R''$ respectively. It is the observation that $F''\simeq g_*q^*$ which brings the nested Hilbert scheme into play. Now, performing a similar calculation to that of \cite[Sections 2.2 \& 2.3]{addington2011new} shows that the kernels of the compositions are:
\begin{eqnarray*}
 R'F' & \simeq & \cO_{A \times A}[2] \oplus \bigoplus_{i=-1}^{2n-1} \cO_{A \times A}^{\oplus 2}[-i] \oplus \cO_{A \times A}[-2n]\\
 R'F''\;\;\simeq\;\; R''F'[2] & \simeq & \cO_{A \times A}[2] \oplus \bigoplus_{i=-1}^{2n-3} \cO_{A \times A}^{\oplus 2}[-i] \oplus\cO_{A \times A}[2-2n]\\
 R'F\;\;\simeq\;\; RF'[-2n] & \simeq & \cO_{A \times A}[2-2n] \oplus \cO_{A \times A}^{\oplus 2}[1-2n] \oplus \cO_{A \times A}[-2n]\\
 R''F'' & \simeq & \cO_\Delta \oplus \bigoplus_{i=1}^{2n-1} \cO_\Delta^{\oplus 2}[-i] \oplus \cO_\Delta[-2n]\\ && \oplus\;\cO_{A \times A} \oplus \bigoplus_{i=1}^{2n-3} \cO_{A \times A}^{\oplus 2}[-i] \oplus\cO_{A \times A}[2-2n]\\
 RF &  \simeq & \cO_\Delta \oplus \bigoplus_{i=1}^{2n-1} \cO_\Delta^{\oplus 2}[-i] \oplus \cO_\Delta[-2n]
\end{eqnarray*}
 where the penultimate line uses the fact that the triangle $$\cO_\Delta\otimes H^*(A^{[n]},\cO_{A^{[n]}}) \ra R''F'' \ra \cO_{A\times A}\otimes H^*(A^{[n-1]},\cO_{A^{[n-1]}})$$ splits because 
the extension class parametrising such triangles is, by construction, invariant under automorphisms of $A$. In particular, if $\iota:A\xra\sim A\;;\;x\mapsto-x$ is the involution then any class $e\in \Ext^1(\cO_{A\times A},\cO_\Delta)\simeq H^1(A,\cO_A)$ must satisfy $\iota^*e=e$. Since $\iota$ acts on $H^1(A,\cO_A)$ as $-\id_A$ we see that $e$ must be zero.

After Example \ref{examples}(2), one might have expected $F:\cD(A)\to\cD(A^{[n+1]})$ to also be a $\bbP^n$-functor but our calculation above shows this is not the case. 
For instance, when $n=1$, we have \begin{equation}\label{Hilb2}H\simeq C :=\cone(\id\srel{\eta}{\ra}RF) \simeq \cO_\Delta^{\oplus 2}[-1] \oplus \cO_\Delta[-2]\end{equation} 
which 
is not an autoequivalence of $\cD(A)$. Therefore, $F$ cannot be a $\bbP^n$-functor because $RF$ has the wrong `shape'. It is precisely our understanding of this example which allows us to see why restricting to the generalised Kummer makes the construction work. Notice, however, that we cannot work with nested generalised Kummer varieties because the natural incidence variety has the wrong dimension. Thus, we are forced to work with the functors above whilst keeping track of a specific subvariety; namely, Beauville's generalised Kummer variety.


\section{Generalised Kummer varieties}\label{genkum}
 The difference with the generalised Kummer variety $K_n$ is that it can be thought of as a fine moduli space of ideal sheaves on $A$ with trivial determinant \emph{and} trivial determinant of the Fourier-Mukai transform (with respect to the Poincar\'e bundle) \cite[Section 4]{yoshioka2001moduli}; this extra condition has the effect of killing the unwanted 
factor $\cO_\Delta^{\oplus2}$ of $H$ in \eqref{Hilb2}. If $i:K_n\hra A^{[n+1]}$ denotes the inclusion then the universal ideal sheaf $\cI_{\bar{\cZ}_{n+1}}:=(\id_A\times i)^*\cI_{\cZ_{n+1}}$ on $A \times K_n$ gives rise to a natural functor $$F_K:\cD(A)\ra\cD(K_n)$$ which is related to $F:\cD(A)\ra\cD(A^{[n+1]})$ in the following way: $$F_K \simeq i^*\circ F \quad\trm{and}\quad R_K \simeq R\circ i_*.$$ 
 
We now explain how the proof of \cite[Theorem 2]{addington2011new} can be adapted to yield
\[R_K'F_K' \srel{(\textbf{1})}{\simeq} \bigoplus_{i=-1}^{n-1}\cO_{A \times A}[-2i]\qquad R_K'F_K'' \srel{(\textbf{2})}{\simeq} \bigoplus_{i=-1}^{n-2}\cO_{A \times A}[-2i]\]
\[R_K'F_K \srel{(\textbf{3})}{\simeq} \cO_{A \times A}[2-2n]\qquad R_K''F_K' \srel{(\textbf{4})}{\simeq} \bigoplus_{i=0}^{n-1}\cO_{A \times A}[-2i]\qquad R_KF_K' \srel{(\textbf{5})}{\simeq} \cO_{A \times A}[2]\]
\[R_K''F_K'' \srel{(\textbf{6})}{\simeq} \bigoplus_{i=0}^{n-1}\cO_\Delta[-2i]\oplus \bigoplus_{i=0}^{n-2}\cO_{A \times A}[-2i]\]
 \[R_KF_K \srel{(\textbf{7})}{\simeq} \bigoplus_{i=0}^{n-1}\cO_\Delta[-2i]\]
 
(\textbf{1}) The composition is given by
\begin{eqnarray*}
 R_K'F_K'(\cE) & := & R'i_*i^*F'(\cE)\\
& \simeq & R'(i_*\cO_{K_n})\otimes H^*(A,\cE)\\
& \simeq & \cO_A\otimes H^*(A^{[n+1]},i_*\cO_{K_n}) \otimes H^*(A,\cE)[2]\\
& \simeq & \cO_A\otimes H^*(K_n,\cO_{K_n}) \otimes H^*(A,\cE)[2]\\
& \simeq & (\cO_A[2] \oplus \cO_A \oplus\cdots\oplus \cO_A[2-2n]) \otimes H^*(A,\cE)
\end{eqnarray*}
and so its kernel must be $$R_K'F_K'  \simeq  \cO_{A\times A}[2]\oplus \cO_{A\times A}\oplus\cdots\oplus\cO_{A\times A}[2-2n].$$
 
(\textbf{2}) In order to verify $R_K'F_K''$, one first needs to make the following observation. Consider the diagram:
 $$\xymatrix{ && A^{[n,n+1]} \ar[d]^-{q\times f} \ar[dll]_-{q} \ar[rr]^{g} &\ar@{}[d]|\circlearrowleft & A^{[n+1]} \ar[d]^-{m} && K_n \ar@{_{(}->}[ll]_-{i} \ar[d]\\  A && A\times A^{[n]} \ar[ll]_-{\quad \pi_1} \ar[rr]^{h} & & A && e \ar@{_{(}->}[ll]}$$
 where $h:=\Sigma\circ(\id_A\times m)$ and $\Sigma:A\times A\ra A$ is the addition map. Then, we have
\begin{eqnarray*}
 q_*g^*i_*\cO_{K_n} & \simeq & \pi_{1*}(q\times f)_*g^*i_*\cO_{K_n}\quad\trm{since }q\simeq\pi_1\circ(q\times f)\\
 &\simeq& \pi_{1*}(q\times f)_*g^*m^*\cO_e\quad\trm{since }i_*\cO_{K_n}\simeq m^*\cO_e\\
 &\simeq& \pi_{1*}(q\times f)_*(q\times f)^*h^*\cO_e\quad\trm{since }m\circ g = h\circ (q\times f)\\
 &\simeq& \pi_{1*}(h^*\cO_e\otimes (q\times f)_*\cO_{A^{[n,n+1]}})\quad\trm{by projection formula}\\
 &\simeq& \pi_{1*}(h^*\cO_e\otimes \cO_{A\times A^{[n]}})\quad\trm{since }(q\times f)_*\cO_{A^{[n,n+1]}}\simeq\cO_{A\times A^{[n]}}\\
 &\simeq& \pi_{1*}\cO_J
\end{eqnarray*}
 where $$J:=\Supp(h^*\cO_e)=h^{-1}(e)=\{(-m(\zeta),\zeta)\in A\times A^{[n]}\}\cong A^{[n]}$$ 
 In other words, $\pi_1|_J:J\ra A$ can be identified with $-m:A^{[n]}\ra A$ and so the required identity $\pi_{1*}\cO_J\simeq \bigoplus_{i=0}^{n-1}\cO_A[-2i]\simeq \cO_A\otimes H^*(K_{n-1},\cO_{K_{n-1}})$ follows from Lemma \ref{splitting} below. 
 Putting this all together, we see that
\begin{eqnarray*}
 R_K'F_K''(\cE) & := & R'i_*i^*F''(\cE)\\
 & \simeq & R'(i_*i^*g_*q^*(\cE))\\
 & \simeq & \cO_A\otimes H^*(A^{[n+1]},i_*i^*g_*q^*(\cE))[2]\\
 & \simeq & \cO_A\otimes H^*(A^{[n+1]},g_*q^*(\cE)\otimes i_*\cO_{K_n})[2]\quad\trm{by projection formula}\\
 & \simeq & \cO_A\otimes H^*(A^{[n,n+1]},q^*(\cE)\otimes g^*i_*\cO_{K_n})[2]\\
 & \simeq & \cO_A\otimes H^*(A,q_*(q^*(\cE)\otimes g^*i_*\cO_{K_n}))[2]\\
 & \simeq & \cO_A\otimes H^*(A,\cE\otimes q_*g^*i_*\cO_{K_n})[2]\quad\trm{by projection formula}\\
 & \simeq & \cO_A\otimes H^*(A,\cE\otimes \pi_{1*}\cO_J)[2]\quad\trm{since }q_*g^*i_*\cO_{K_n}\simeq\pi_{1*}\cO_{J}\\
 & \simeq & \cO_A\otimes H^*(K_{n-1},\cO_{K_{n-1}})\otimes H^*(A,\cE)[2]\\
 & \simeq & (\cO_A[2]\oplus\cO_A\oplus\cdots\oplus\cO_A[4-2n])\otimes H^*(A,\cE)
\end{eqnarray*}
and the kernel is given by $$R_K'F_K'' \simeq \cO_{A\times A}[2]\oplus\cO_{A\times A}\oplus\cdots\oplus\cO_{A\times A}[4-2n].$$
 
(\textbf{3}) To get our hands on $R_K'F_K$, we take cohomology of the natural triangle $$R_K'F_K\ra R_K'F_K' \ra R_K'F_K''$$ which produces the following long exact sequences 
$$0\ra \cH^i(R_K'F_K) \ra \cO_{A\times A} \ra \cO_{A\times A} \ra \cH^{i+1}(R_K'F_K) \ra 0$$  for each $i=-2,0,\ldots,2n-4$ as well as an identification $\cH^{2n-2}(R_K'F_K) \xra\sim \cO_{A\times A}$. In other words, if one can show  that the map $R_K' F_K' \to R_K' F_K''$ induces an isomorphism (or indeed a non-zero map) on $\cH^i$ for all $-2\leq i\leq 2n-4$ then $$\cH^i(R_K'F_K)=\left\{\begin{array}{cc}\cO_{A\times A} & \trm{if }i=2n-2\\ 0 & \trm{o/w}\end{array}\right.$$ and the kernel must be $$R_K'F_K \simeq \cO_{A\times A}[2-2n].$$
Given that the map of functors $R_K'F_K' \ra R_K'F_K''$ corresponds to the following map $\pi_{13*}(\pi_{12}^*\cO_{A\times K_n}\otimes\pi_{23*}\cO^\vee_{K_n\times A}[2])\ra \pi_{13*}(\pi_{12}^*\cO_{\bar{\cZ}_{n+1}}\otimes\pi_{23*}\cO^\vee_{K_n\times A}[2])$ of kernels, whose cohomology is given by $$\cE xt^{i+2}_{\pi_{13}}\left(\pi_{23}^*\cO_{K_n\times A},\pi_{12}^*\cO_{A\times K_n}\right)\ra \cE xt^{i+2}_{\pi_{13}}(\pi_{23}^*\cO_{K_n\times A},\pi_{12}^*\cO_{\bar{\cZ}_{n+1}}),$$ we see that a nonzero map here for $i=-2,0,\ldots,2n-4$ is equivalent to a nonzero map $$\cH^i(\pi_{1*}\cO_{A\times K_n})\ra \cH^i(\pi_{1*}\cO_{\bar{\cZ}_{n+1}})\quad\trm{for }i=0,2,\ldots,2n-2.$$ 
Since $\cO_{\bar{\cZ}_{n+1}}:=(\id_A\times i)^*\cO_{\cZ_{n+1}}\simeq(\id_A\times i)^*(q\times g)_*\cO_{A^{[n,n+1]}}$, we can consider the following diagram
$$\xymatrix{
&& A^{[n,n+1]} \ar[rr]^-{q\times g} \ar[d]_{q\times f} && A\times A^{[n+1]} \ar[d]^{\pi_1} && A\times K_n \ar[ll]_-{\id_A\times i} \ar[dll]^{\pi_1}\\ 
A\times K_{n-1} \ar[rr]^{\id_A\times i} && A\times A^{[n]} \ar[rr]_-{\pi_1} && A &&
}$$
and check the condition on the fibres of $q$ (which are all smooth); see \cite[p.21]{addington2011new}. That is, for $x\in A$ we want to show that $g^*|_{K_n}:H^i(\cO_{K_n})\xra\sim H^i(\cO_{(q\times f)^{-1}(x\times K_{n-1})})$ for all $0\leq i\leq 2n-2$. If $\sigma_{n+1}$ and $\sigma_{n}$ denote the natural symplectic forms on $A^{[n+1]}$ and $A^{[n]}$ induced by a symplectic form $\sigma$ on $A$ \cite[Proposition 5]{beauville1983varietes} then we have $g^*\sigma_{n+1}=q^*\sigma+f^*\sigma_n$ and $\bar\sigma_{n+1}^j\in H^{2j}(\cO_{A^{[n+1]}})$ is mapped to $f^*\bar\sigma_n^j\in H^{2j}(\cO_{q^{-1}(x)})$. Since $\sigma_{n+1}$ (respectively $\sigma_n$) induces a symplectic form on $K_n$ (respectively $K_{n-1}$) \cite[Proposition 7]{beauville1983varietes} and $f_*\cO_{(q\times f)^{-1}(x\times K_{n-1})}\simeq i^*f_*\cO_{q^{-1}(x)}\simeq i^*\cO_{A^{[n]}}\simeq\cO_{K_{n-1}}$ we see that $f^*|_{K_{n-1}}:H^i(\cO_{K_{n-1}})\xra\sim H^i(\cO_{(q\times f)^{-1}(x\times K_{n-1})})$ is an isomorphism and the generator of $H^{2j}(\cO_{K_n})$ must get mapped to the generator of $H^{2j}(\cO_{(q\times f)^{-1}(x\times K_{n-1})})$. 

 (\textbf{4}) 
Observe that $R_K''F_K'$ is the right adjoint of $L_K'F_K''$ and so 
 \begin{eqnarray*}
 R_K''F_K' &\simeq & (L_K' F_K'')^\vee [2]\quad\trm{since $R_K''F_K'\simeq S_{\cD(A)}^{}(L_K' F_K'')^\vee$}\\
 && \hspace{3cm}\trm{by \cite[Remark 5.8]{huybrechts2006fourier}}\\
 &\simeq & (R_K'[2n-2] F_K'')^\vee [2]\quad\trm{since $L_K'\simeq S_{\cD(A)}^{-1}R_K'S_{\cD(K_n)}^{}\simeq R_K'[2n-2]$}\\
&&\hspace{4.3cm}\trm{by \cite[Remark 1.31]{huybrechts2006fourier}} \\
 &\simeq & (R_K'F_K'')^\vee [4-2n]\\
 &\simeq & \cO_{A\times A}\oplus\cO_{A\times A}[-2]\oplus\cdots\oplus\cO_{A\times A}[2-2n].
 \end{eqnarray*}

(\textbf{5}) Similarly, the adjunction $L_K'F_K\dashv R_KF_K'$ provides us with 
\begin{eqnarray*}
 R_KF_K' &\simeq & (R_K'F_K)^\vee [4-2n]\\
 &\simeq & \cO_{A\times A}[2].
 \end{eqnarray*}

(\textbf{6}) The most technical calculation is that of $R_K''F_K''$. 
First notice that the kernel of the composition \cite[Proposition 5.10]{huybrechts2006fourier} is given by
\begin{eqnarray*}
R_K''F_K'' &\simeq& \pi_{13*}(\id_A\times i\times \id_A)^*(\cO_{\cZ_{n+1}\times A} \otimes \cO_{A\times \cZ_{n+1}}^\vee)[2]\\
&\simeq& \pi_{13*}(\cO_{\cZ_{n+1}\times A} \otimes \cO_{A\times \cZ_{n+1}}^\vee \otimes \pi_2^* i_* \cO_{K_n})[2]
\end{eqnarray*}
where $\cO_{\cZ_{n+1}\times A} \otimes \cO_{A\times \cZ_{n+1}}^\vee$ is supported on $(\cZ_{n+1} \times A) \cap (A \times \cZ_{n+1}) = \cZ_{n+1} \times_{A^{[n+1]}} \cZ_{n+1}$. Now, consider the maps
\[q \times g \times q: A^{[n,n+1]} \to A \times A^{[n+1]} \times A\;;\;(\zeta, \xi) \mapsto (\xi\backslash\zeta, \xi, \xi\backslash\zeta),\]
\[r: A^{[n-1,n,n+1]} \to A \times A^{[n+1]} \times A\;;\;(\eta, \zeta, \xi) \mapsto (\zeta\backslash\eta, \xi, \xi\backslash\zeta).\]
Then Addington \cite[p.23]{addington2011new} shows that there is an exact triangle
\begin{eqnarray}\label{triangle}
(q \times g \times q)_* \cO_{A^{[n,n+1]}}\to \cO_{\cZ_{n+1}\times A} \otimes \cO_{A\times \cZ_{n+1}}^\vee\to r_* \pr_{23}^* \cO(E),
\end{eqnarray}
where $\pr_{23}: A^{[n-1,n,n+1]} \to A^{[n,n+1]}\;;\;(\eta, \zeta, \xi) \mapsto (\zeta,\xi)$ is the projection. It is clear from the expression for $R_K''F_K''$ above that we want to pull this sequence back to $A\times K_n\times A$ and then push it down to $A \times A$. 

For the first term of \eqref{triangle}, observe that the following diagram commutes:
\[\xymatrix{
&& A^{[n+1]} && K_n\ar[ll]_-i\\
A^{[n,n+1]} \ar[d]_{q} \ar[rr]^-{q\times g\times q} \ar[urr]^-g && A \times A^{[n+1]} \times A \ar[d]^{\pi_{13}} \ar[u]_-{\pi_2} && A\times K_n\times A \ar[dll]^{\pi_{13}} \ar[ll]_-{\id_A\times i\times\id_A} \ar[u]_-{\pi_2}\\
A \ar[rr]_-{\Delta} && A \times A&&} \]
We have seen that $q_*g^*i_*\cO_{K_n} \simeq \cO_A \otimes H^*(K_{n-1},\cO_{K_{n-1}})$ and so the first term becomes 
\begin{eqnarray*}
\pi_{13*} ( (q \times g \times q)_* \cO_{A^{[n,n+1]}} \otimes \pi_2^* i_* \cO_{K_n} ) &\simeq& \pi_{13*} (q \times g \times q)_* (q \times g \times q)^* \pi_2^* i_* \cO_{K_n}\\
&&\hspace{2cm}\trm{by projection formula}\\
&\simeq& \Delta_* q_* g^* i_* \cO_{K_n}\\
&\simeq& \Delta_* (\cO_A \otimes H^*(K_{n-1},\cO_{K_{n-1}}))\\
&\simeq& \cO_\Delta \otimes H^*(K_{n-1},\cO_{K_{n-1}}).
\end{eqnarray*}

For the last term of \eqref{triangle}, we use a similar diagram (and temporarily introduce another map $q_{23}:A^{[n-1,n,n+1]}\to A\;;\;(\eta,\zeta,\xi)=\xi\backslash\zeta$)
\[\resizebox{\columnwidth}{!}{ \xymatrix{A&& A^{[n,n+1]} \ar[rr]^-g \ar[dll]_(0.275){q\times f} && A^{[n+1]} && K_n \ar[ll]_-i\\
A\times A^{[n]}\ar[u]^-h && A^{[n-1,n,n+1]} \ar[rr]^-r \ar[u]_-{\pr_{23}} \ar[dll]_-{q_{23}\times \pr_{12}\;\;} && A \times A^{[n+1]} \times A \ar[d]^-{\pi_{13}} \ar[u]_-{\pi_2} && A\times K_n\times A \ar[ll]_-{\id_A\times i\times\id_A} \ar[dll]^-{\pi_{13}} \ar[u]_-{\pi_2}\\ 
A\times A^{[n-1,n]} \ar[u]^-{\id_A\times g} \ar[rr]^-{\id_A\times q\times f} && A\times A\times A^{[n-1]} \ar[rr]^-{\pi_{21}} \ar@/_1.7pc/[uull]_(0.8){\Sigma\circ(\id_A\times h)} && A\times A &&}}\]
\begin{eqnarray*}
&& \pi_{13*} ( r_*\pr_{23}^* \cO(E) \otimes \pi_2^* i_* \cO_{K_n} ) \\
&\simeq& \pi_{13*}r_* ( \pr_{23}^* \cO(E) \otimes r^*\pi_2^* i_* \cO_{K_n} )\quad\trm{by projection formula}\\
&\simeq& \pi_{21*}(\id_A\times q\times f)_*(q_{23}\times\pr_{12})_*\pr_{23}^*( \cO(E) \otimes g^* i_* \cO_{K_n} )\\
&&\hspace{1cm}\trm{since }\pi_{13}\circ r\simeq \pi_{21}\circ(\id_A\times q\times f)\circ(q_{23}\times\pr_{12})\trm{ and }g\circ \pr_{23}\simeq\pi_2\circ r\\
&\simeq& \pi_{21*}(\id_A\times q\times f)_*(\id_A\times g)^*(q\times f)_*(\cO(E) \otimes g^* i_* \cO_{K_n} )\\
&&\hspace{1cm}\trm{by base change \cite[Appendix A]{addington2011new}}\\
&\simeq& \pi_{21*}(\id_A\times q\times f)_*(\id_A\times g)^*(q\times f)_*(\cO(E) \otimes (q\times f)^* \cO_{J} )\\
&&\hspace{1cm}\trm{since }g^* i_* \cO_{K_n}\simeq (q\times f)^*\cO_J\\
&\simeq& \pi_{21*}(\id_A\times q\times f)_*(\id_A\times g)^*((q\times f)_*\cO(E)\otimes \cO_{J})\quad\trm{by projection formula}\\
&\simeq& \pi_{21*}(\id_A\times q\times f)_*(\id_A\times g)^*\cO_{J}\quad\trm{since }(q\times f)_*\cO(E)\simeq\cO_{A\times A^{[n]}}\\
&\simeq& \pi_{21*}(\id_A\times q\times f)_*(\id_A\times g)^*h^*\cO_e\quad\trm{since }J:=h^{-1}(e)\\
&\simeq& \pi_{21*}(\id_A\times q\times f)_*(\id_A\times q\times f)^*(\id_A\times h)^*\Sigma^*\cO_e\\
&&\hspace{1cm}\trm{since }h\circ(\id_A\times g)\simeq(\Sigma\circ(\id_A\times h))\circ(\id_A\times q\times f)\\
&\simeq& \pi_{21*}((\id_A\times h)^*\Sigma^*\cO_e\otimes(\id\times q\times f)_*\cO_{A\times A^{[n-1,n]}})\quad\trm{by projection formula}\\
&\simeq& \pi_{21*}(\id_A\times h)^*\Sigma^*\cO_e\quad\trm{since }(\id\times q\times f)_*\cO_{A\times A^{[n-1,n]}}\simeq\cO_{A\times A\times A^{[n-1]}}\\
&\simeq& \pi_{21*}\cO_{J'}
\end{eqnarray*}
where \[J':=\Supp((\id_A\times h)^*\Sigma^*\cO_e)=\{(x,-x-m(\eta),\eta)\}\simeq A\times A^{[n-1]}.\]
As before, we can identify $\pi_{21}|_{J'}:J'\to A\times A$ with $-h\times\id_A:A^{[n-1]}\times A\to A\times A$ and the identity $\pi_{21*}\cO_{J'}\simeq\cO_{A\times A} \otimes H^*(K_{n-2},\cO_{K_{n-2}})$ follows from Lemma \ref{splitting} below.


Thus we have an exact triangle
$$ \cO_\Delta \otimes H^*(K_{n-1},\cO_{K_{n-1}}) \ra R_K''F_K'' \ra \cO_{A \times A} \otimes H^*(K_{n-2},\cO_{K_{n-2}}), $$
which must split because 
it is natural and hence (its defining morphism is) invariant under automorphisms of $A$; use the same argument as in \S\ref{HilbA}. 
 In other words, the kernel of this composition is given by \[R_K''F_K''=\left\{\begin{array}{c}\cO_\Delta\oplus\cO_\Delta[-2]\oplus\cdots\oplus\cO_\Delta[2-2n] \\\\ \oplus\;\cO_{A\times A}\oplus\cO_{A\times A}[-2]\oplus\cdots\oplus\cO_{A\times A}[4-2n]\end{array}\right.\]


 
 
(\textbf{7}) For the computation of $R_KF_K$, we exploit the following diagram of exact triangles
$$ \xymatrix{
R_K''F_K \ar[r] \ar[d] & R_K'F_K \ar[r] \ar[d] & R_KF_K \ar[d] \\
R_K''F_K' \ar[r] \ar[d] & R_K'F_K' \ar[r] \ar[d] & R_KF_K' \ar[d] \\
R_K''F_K'' \ar[r] & R_K'F_K'' \ar[r] & R_KF_K''
} $$
Taking cohomology of the left column and bottom row yields $$\cH^i(R_K''F_K)=\left\{\begin{array}{cc}\cO_\Delta & \trm{if }i=1,3,\ldots,2n-3\\ \cI_\Delta & \trm{if }i=2n-2\\ 0 & \trm{o/w}\end{array}\right.$$ and $$\cH^i(R_KF_K'')=\left\{\begin{array}{cc}\cO_{A\times A} & \trm{if }i=-2\\ \cO_\Delta & \trm{if }i=-1,1,\ldots,2n-3\\ 0 & \trm{o/w}\end{array}\right.$$ Then we can take cohomology of either the top row or right column to get $$\cH^i(R_KF_K)=\left\{\begin{array}{cc}\cO_\Delta & \trm{if }i=0,2,\ldots,2n-2\\ 0 & \trm{o/w}\end{array}\right.$$ Thus, $R_KF_K$ has a filtration whose associated graded object is $\bigoplus_{i=0}^{n-1}\cO_\Delta[-2i]$, but since classes in $\Ext^1_{A \times A}(\cO_\Delta, \cO_\Delta) \simeq HH^1(A)\simeq H^1(A,\cO_A)\oplus H^0(A,\cT_A)$ are invariant under automorphisms of $A\times A$, the filtration splits; use the same argument as in \S\ref{HilbA} with $\iota$ replaced by $\iota\times\iota$. (One also needs to check that $\iota\times\iota$ acts on $\Ext^3_{A\times A}(\cO_\Delta,\cO_\Delta)$ as $-1$ but this follows from Serre duality 
together with the fact that a volume form on $A\times A$ is invariant under $(\iota\times\iota)^*$ because we have even dimension.) Therefore, the kernel is $$R_KF_K=\cO_\Delta\oplus\cO_\Delta[-2]\oplus\cdots\oplus\cO_\Delta[2-2n].$$


 The monad structure $R_KF_KR_KF_K\srel{\mu}{\ra} R_KF_K$ is the hard part of the proof but again, we can appeal to \cite[Section 2.5]{addington2011new}. Indeed, the adjoint pairs $g_*\dashv g^!$ and $i^*\dashv i_*$ provide us with a map of monads $q_*q^*\to q_*g^!i_*i^*g_*q^*=R_K''F_K''$. Combining this with our observation that $q_*g^*i_*\cO_{K_n}\simeq\pi_{1*}\cO_J$, we can identify the monad structure of $R_K''F_K''$ with the multiplication in $H^*(K_{n-1},\cO_{K_{n-1}})$. Producing a map $q_*q^*\to R_KF_K$ (where we implicitly understand that $q:=q|_{(q\times f)^*J}$) goes through in exactly the same way as \cite[p.26-31]{addington2011new}. Thus, we can conclude with the following

\begin{thm}\label{kummer}
 Let $K_n$ be the generalised Kummer variety associated to an abelian surface $A$ and consider the natural functor $F_K:\cD(A)\ra\cD(K_n)$ induced by the universal ideal sheaf $\cI_{\bar{\cZ}_{n+1}}$ on $A\times K_n$. Then $F_K$ is a $\bbP^{n-1}$-functor for all $n>1$. In particular, we have a non-trivial derived autoequivalence \[P_{F_K}:=\cone(\cone(F_KR_K[-2]\to F_KR_K)\to\id_{K_n})\in\Aut(\cD(K_n))\]
\end{thm}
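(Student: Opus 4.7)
The plan is to verify the three conditions in the definition of a $\bbP^{n-1}$-functor with candidate autoequivalence $H=[-2]$ on $\cD(A)$, and then apply Addington's Theorem \cite[Theorem 3]{addington2011new} to produce $P_{F_K}$.

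Condition (i) is immediate from computation (\textbf{7}): we have $R_KF_K\simeq\bigoplus_{i=0}^{n-1}\cO_\Delta[-2i]\simeq\id\oplus H\oplus\cdots\oplus H^{n-1}$. Condition (iii), equivalent to $S_{\cD(K_n)}F_KH^{n-1}\simeq F_KS_{\cD(A)}$, is essentially free from the Calabi--Yau/hyperk\"ahler geometry: $S_{\cD(A)}\simeq[2]$ because $A$ is an abelian surface, and $S_{\cD(K_n)}\simeq[2n]$ because $K_n$ is a $2n$-dimensional hyperk\"ahler variety, so with $H^{n-1}=[2-2n]$ both sides collapse to $F_K[2]$.

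The hard part will be condition (ii), the monad compatibility. The strategy, following \cite[Section 2.5]{addington2011new}, is to use the adjunctions $g_*\dashv g^!$ and $i^*\dashv i_*$ to construct a natural map of monads $q_*q^*\to R_K''F_K''$, and then, after restricting $q$ to $(q\times f)^{-1}(J)$ for the locus $J\subset A\times A^{[n]}$ from (\textbf{2}), a compatible map of monads $q_*q^*\to R_KF_K$. Combined with the identification $q_*g^*i_*\cO_{K_n}\simeq\pi_{1*}\cO_J\simeq\cO_A\otimes H^*(K_{n-1},\cO_{K_{n-1}})$ already established in (\textbf{2}), this identifies the monad structure on $R_KF_K$ with multiplication in the graded algebra $H^*(K_{n-1},\cO_{K_{n-1}})$. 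Since $K_{n-1}$ is hyperk\"ahler of dimension $2(n-1)$, the latter is the truncated polynomial ring $\bbC[x]/x^n$ with $|x|=2$; the multiplications $x\cdot x^{i}=x^{i+1}$ translate into exactly the $1$'s on the subdiagonal required by condition (ii), while the diagonal and super-diagonal entries (the $\ast$'s) are allowed to be arbitrary. With all three conditions in hand, the second claim of the theorem follows directly from Addington's theorem.
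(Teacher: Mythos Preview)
Your proposal is correct and follows essentially the same approach as the paper: condition (i) from computation (\textbf{7}), condition (iii) from the Serre functors being shifts, and condition (ii) via the map of monads $q_*q^*\to R_K''F_K''$ coming from the adjunctions $g_*\dashv g^!$ and $i^*\dashv i_*$, together with the identification $q_*g^*i_*\cO_{K_n}\simeq\pi_{1*}\cO_J$ and the restriction of $q$ to the preimage of $J$, exactly as in \cite[Section 2.5]{addington2011new}. The only cosmetic difference is that you spell out condition (iii) and the ring structure $H^*(K_{n-1},\cO_{K_{n-1}})\simeq\bbC[x]/x^n$ more explicitly than the paper does.
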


\section{The Albanese Map}\label{Albmap}
 Recall that the Albanese map $m:A^{[n]}\ra A$ is isotrivial. That is, we have the following cartesian diagram:
 $$\xymatrix{A\times K_{n-1} \ar[r]^-{\nu} \ar[d]_-{\pi_1} & A^{[n]} \ar[d]^-{m} & \left(x,\zeta\right) \ar@{|->}[r]^-{\nu} \ar@{|->}[d]_-{\pi_1} & t_x\zeta \ar@{|->}[d]^-{m} \\
 A \ar[r]_-{\varphi} & A & x \ar@{|->}[r]_-{\varphi} & nx }$$
 Explicitly, we have $A\times_A A^{[n]}=\left\{(x,\zeta)\in A\times A^{[n]}\;|\;m\zeta=nx\right\}$ and an isomorphism $A\times_A A^{[n]} \srel{\sim}{\lra} A\times K_{n-1}\;;\;(x,\zeta) \mapsto (x,t_{-x}\zeta)$. In other words, the morphism $\nu$ is just the restriction of the translation map on $A^{[n]}$ to $K_{n-1}$.

\begin{lem}\label{splitting}
 $m_*\cO_{A^{[n]}}\simeq\bigoplus_{i=0}^{n-1}\cO_A[-2i]$ is a formal object in $\cD(A)$.
\end{lem}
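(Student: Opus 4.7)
The plan is to pull back along the étale cover $\varphi=[n]:A\to A$, where the isotriviality makes everything transparent, and then descend. Since $\varphi$ is finite flat, flat base change on the Cartesian square above yields
\[\varphi^*Rm_*\cO_{A^{[n]}}\;\simeq\;R\pi_{1*}\nu^*\cO_{A^{[n]}}\;\simeq\;R\pi_{1*}\cO_{A\times K_{n-1}}\;\simeq\;\cO_A\otimes_\bbC H^*(K_{n-1},\cO_{K_{n-1}}),\]
and since $K_{n-1}$ is an irreducible holomorphic symplectic variety of dimension $2(n-1)$ we have $H^*(K_{n-1},\cO_{K_{n-1}})\simeq\bigoplus_{i=0}^{n-1}\bbC[-2i]$, with the graded pieces spanned by the powers $\bar\sigma_{n-1}^i$. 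Hence after the étale pullback the object already has the desired shape; the remaining work is to descend this from $A$ upstairs to $A$ downstairs.

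To identify the cohomology sheaves: by the above $R^im_*\cO_{A^{[n]}}$ vanishes for odd $i$ or $i>2(n-1)$, and for $i=2j$ with $0\leq j\leq n-1$ it is a line bundle whose pullback along $\varphi^*=[n]^*$ is $\cO_A$, i.e.\ an $n$-torsion line bundle. I would trivialise it using the symplectic form $\sigma_n$ on $A^{[n]}$: the class $\bar\sigma_n^j\in H^{2j}(A^{[n]},\cO_{A^{[n]}})$ produces via the Leray edge map a global section of $R^{2j}m_*\cO_{A^{[n]}}$, and evaluation of this section at $a\in A$ through the Grauert base-change isomorphism $R^{2j}m_*\cO_{A^{[n]}}\otimes k(a)\simeq H^{2j}(m^{-1}(a),\cO_{m^{-1}(a)})$ recovers the restriction $\bar\sigma_n^j|_{m^{-1}(a)}$. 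By Künneth (using $h^{1,0}(K_{n-1})=0$) the pullback $\nu^*\bar\sigma_n\in H^2(A\times K_{n-1},\cO)$ decomposes as $a\,\pi_1^*\bar\sigma_A+b\,\pi_2^*\bar\sigma_{n-1}$ with $a,b\neq0$ by non-degeneracy, so restricting to any sheet $\{y\}\times K_{n-1}$ of the étale cover over $m^{-1}(a)$ yields $b\bar\sigma_{n-1}$. Consequently $\bar\sigma_n^j|_{m^{-1}(a)}=b^j\bar\sigma_{n-1}^j$ is a generator of $H^{2j}(K_{n-1},\cO_{K_{n-1}})$, so the section is nowhere vanishing and $R^{2j}m_*\cO_{A^{[n]}}\simeq\cO_A$.

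For the formality of $Rm_*\cO_{A^{[n]}}$ itself: in characteristic zero the trace for the finite flat map $\varphi$ of degree $n^4$ splits the unit $\id\to\varphi_*\varphi^*$ (via $\tfrac{1}{n^4}$ times the trace), so $Rm_*\cO_{A^{[n]}}$ is a direct summand of $\varphi_*\varphi^*Rm_*\cO_{A^{[n]}}\simeq\bigoplus_{i=0}^{n-1}\varphi_*\cO_A[-2i]$, which is a finite sum of shifted sheaves and therefore manifestly formal. A direct summand of a formal complex is formal: composing the inclusion $Rm_*\cO_{A^{[n]}}\hookrightarrow\bigoplus_{i=0}^{n-1}\varphi_*\cO_A[-2i]$ with the projection onto the $i$-th factor and then with projection onto the relevant summand of $\cH^{2i}$ produces morphisms $Rm_*\cO_{A^{[n]}}\to\cO_A[-2i]$, whose sum is an isomorphism on cohomology and therefore a quasi-isomorphism onto $\bigoplus_{i=0}^{n-1}\cO_A[-2i]$.

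The main obstacle is the middle step. The étale calculation by itself only tells us that each $R^{2j}m_*\cO_{A^{[n]}}$ is an $n$-torsion line bundle on $A$; narrowing it down to the trivial bundle cannot be done by pure formal manipulation and requires the geometric input provided by the symplectic form on $A^{[n]}$.
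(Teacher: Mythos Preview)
Your proof is correct but takes a more elaborate route than the paper's. Both arguments share the same geometric core: powers of the symplectic form on $A^{[n]}$ furnish nowhere-vanishing global sections of the sheaves $R^{2j}m_*\cO_{A^{[n]}}$, forcing these to be trivial line bundles. Where the approaches diverge is in the formality step. You pull back along the \'etale isogeny $\varphi=[n]$, use the trace to split $Rm_*\cO_{A^{[n]}}$ off from $\varphi_*\varphi^*Rm_*\cO_{A^{[n]}}\simeq\bigoplus_i\varphi_*\cO_A[-2i]$, and then build an explicit quasi-isomorphism to $\bigoplus_i\cO_A[-2i]$ by projecting each $\varphi_*\cO_A$ onto its trivial summand. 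The paper's argument is much shorter: once the cohomology sheaves are known to be $\cO_A$ in even degrees only, the filtration splits because the obstruction groups $\Ext^{2k+1}_A(\cO_A,\cO_A)\simeq H^{2k+1}(A,\cO_A)$ vanish for all $k\geq1$, $A$ being a surface. Your \'etale/trace method is a perfectly valid alternative and would transport to situations where such Ext-vanishing fails, but here it is not needed; likewise, the preliminary identification of the $R^{2j}m_*\cO_{A^{[n]}}$ as $n$-torsion line bundles via base change is superseded by the symplectic-form argument and plays no essential role.
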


\begin{proof}
 Semicontinuity implies that $\cH^{2i}(m_*\cO_{A^{[n]}})=:\cL_i$ is locally free of rank one and $\cH^{2i+1}(m_*\cO_{A^{[n]}})=0$ for all $0\leq i\leq n-1$. Beauville \cite[Proposition 7]{beauville1983varietes} shows that the natural symplectic structure on $A^{[n]}$ induces a symplectic structure on the fibres $K_{n-1}$ of $m$. In particular, powers of the symplectic form provide nowhere vanishing sections of the $\cL_i$. Therefore, $\cL_i\simeq\cO_A$ for all $0\leq i\leq n-1$ and $m_*\cO_{A^{[n]}}$ has a filtration in cohomology sheaves which must split because $\Ext_A^{2k+1}(\cO_A,\cO_A)\simeq H^{2k+1}(A,\cO_A)=0$ for all $k\geq1$.
%
%
\end{proof}

\begin{thm}\label{Albanese}
 $m^*:\cD(A)\ra\cD(A^{[n]})$ is a $\bbP^{n-1}$-functor with $H=[-2]$.
\end{thm}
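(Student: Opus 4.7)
The strategy is to verify the three axioms of a $\bbP^{n-1}$-functor for $F := m^*$ directly, with $H := [-2]$, and let Lemma \ref{splitting} do most of the heavy lifting.

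Axioms (i) and (iii) should be nearly immediate. Since $m$ is proper, the right adjoint is $R = m_*$, and the projection formula yields $RF \simeq m_*\cO_{A^{[n]}} \otimes^L (-)$; Lemma \ref{splitting} then immediately produces $RF \simeq \id \oplus [-2] \oplus \cdots \oplus [-2(n-1)]$, verifying (i). For (iii), both $A$ and $A^{[n]}$ have trivial canonical class (the latter because $A^{[n]}$ is irreducible holomorphic symplectic), so the Serre functors are pure shifts $S_{\cD(A)} = [2]$ and $S_{\cD(A^{[n]})} = [2n]$. The standard formula $L \simeq S_{\cD(A)}^{-1} R\, S_{\cD(A^{[n]})}$ \cite[Remark 1.31]{huybrechts2006fourier} then gives $L \simeq m_*[2n-2]$, and hence $H^{n-1}L \simeq [-2(n-1)]\circ m_*[2n-2] \simeq m_* = R$.

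The real content lies in axiom (ii). My approach is to use the projection formula to identify the monad $RF$ with the functor $m_*\cO_{A^{[n]}}\otimes^L(-)$, to identify $RFRF$ with $m_*\cO_{A^{[n]}} \otimes^L m_*\cO_{A^{[n]}} \otimes^L (-)$, and to identify the monad multiplication $\mu$ with the $\cO_A$-algebra multiplication on $m_*\cO_{A^{[n]}}$. Under the splitting of Lemma \ref{splitting}, this algebra is (generically) the graded ring $H^*(K_{n-1},\cO_{K_{n-1}})$. Since $K_{n-1}$ is irreducible holomorphic symplectic of dimension $2(n-1)$, this ring is $\bbC[\bar\sigma_{n-1}]/(\bar\sigma_{n-1}^n)$, generated by the class of the holomorphic symplectic form in degree $2$. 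The embedding $HRF \hookrightarrow RFRF$ picks out the $\bar\sigma_{n-1}$-summand of the first $m_*\cO_{A^{[n]}}$ factor, so the composite $HRF \to RF$ is multiplication by $\bar\sigma_{n-1}$. Since $\bar\sigma_{n-1}^i \mapsto \bar\sigma_{n-1}^{i+1}$ is a nonzero map of one-dimensional spaces for $0 \leq i \leq n-2$, the resulting matrix has the required $1$s on the subdiagonal (up to rescaling summands), completing axiom (ii).

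The main obstacle is rigorously matching the monad multiplication with the cup product under the formal splitting of Lemma \ref{splitting}: the lemma only produces a direct-sum isomorphism in $\cD(A)$, and one must still check that the $\cO_A$-algebra structure on $m_*\cO_{A^{[n]}}$ induces multiplication by $\bar\sigma_{n-1}$ in the expected way. This is naturality combined with a check on fibres, ultimately reducing to the statement that powers of the symplectic form $\bar\sigma_{n-1}$ generate $H^*(K_{n-1},\cO_{K_{n-1}})$, which was already exploited in the proof of Lemma \ref{splitting}.
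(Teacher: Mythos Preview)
Your approach is correct and essentially identical to the paper's: verify (i) via the projection formula and Lemma \ref{splitting}, (iii) via the Serre-functor identity, and reduce (ii) to a fibrewise check that the monad multiplication is cup product on $H^*(K_{n-1},\cO_{K_{n-1}})$, generated by the class of the symplectic form. One small slip worth correcting: $A^{[n]}$ is \emph{not} irreducible holomorphic symplectic when $A$ is an abelian surface (it has $b_1=4$), though your conclusion that $\omega_{A^{[n]}}$ is trivial is still correct since $A^{[n]}$ carries a holomorphic symplectic form.
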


\begin{proof}
 First, we note that
 \begin{eqnarray*}
 m_*m^*\cE &\simeq& \cE\otimes m_*\cO_{A^{[n]}} \quad\trm{by projection formula}\\
 & \simeq & \bigoplus_{i=0}^{n-1}\cE[-2i]\quad\trm{by Lemma \ref{splitting}}.
 \end{eqnarray*}
Next, we need to check that $m_*m^*[-2]\hra m_*m^*m_*m^*\xra{m_*\epsilon m^*}m_*m^*$ induces an isomorphism on $\cH^i$ for $2 \le i \le 2n-2$ but this can be pointwise, in which case it follows immediately from Lemma \ref{splitting}. Finally, the condition $S_{A^{[n]}}m^*[2-2n]\simeq m^*S_A$ is satisfied because $S_{A^{[n]}}\simeq [2n]$ and $S_A\simeq[2]$.
\end{proof}

\section{Equivariant Approach}\label{equiv}
In this section we provide an alternative proof of \cite[Theorem 2]{addington2011new} and our Theorem \ref{kummer} above which makes use of the BKR-Haiman equivalence together with results of Scala \cite{scala2009cohomology} and Krug \cite{krug2014extension}. It should be noted that this proof of \cite[Theorem 2]{addington2011new} is very similar to an unpublished proof by Markman and Mehrotra \cite{markman2011integral}; 
the main difference is that they compute $RF$ directly, by calculating equivariant $\Tor$'s and using a standard vanishing Lemma in derived categories, whereas Krug's formulae \cite[Theorem 3.17]{krug2014extension} (which work for \emph{any} smooth projective surface) together with the methods therein, allow us to deduce $RF$ from all the other compositions $R'F'$, $R''F'$, $R'F''$ and $R''F''$. Also, the case of the generalised Kummer variety follows more easily from our approach.

Haiman \cite[Theorem 5.1]{haiman2001hilbert} proved that the Bridgeland-King-Reid theorem \cite[Theorem 1.1]{bridgeland2001mckay} applied to the Hilbert scheme $X^{[n+1]}$ of points on a smooth projective surface $X$ yields a derived equivalence $\Psi:\cD(X^{[n+1]})\xra{\sim}\cD^{\frS_{n+1}}(X^{n+1})$ between the bounded derived category of coherent sheaves on the Hilbert scheme and the bounded derived category of $\frS_{n+1}$-equivariant sheaves on the product.

This equivalence has been used extensively in the study of tautological objects on Hilbert schemes, that is, objects which lie in the image of our Fourier-Mukai functor $F'':\cD(X)\ra\cD(X^{[n+1]})$. In particular, we have the following 
\begin{thm}\label{extformula}
 Let $X$ be a smooth projective surface and consider the tautological objects $F''(\cE), F''(\cF)\in\cD(X^{[n+1]})$ associated to $\cE,\cF\in\cD(X)$. Then we have the following natural isomorphisms of graded vector spaces
 \begin{eqnarray*}
 \Ext^*(F''(\cE),F''(\cF)) &\simeq& \Ext^*(\cE,\cF)\otimes S^nH^*(X,\cO_X)\\ && \oplus\; H^*(X,\cE^\vee)\otimes H^*(X,\cF) \otimes S^{n-1}H^*(X,\cO_X)\\
 \Ext^*(F''(\cE),F'(\cF)) &\simeq& H^*(X,\cE^\vee)\otimes H^*(X,\cF) \otimes S^nH^*(X,\cO_X)\\
  \Ext^*(F'(\cE),F''(\cF)) &\simeq& H^*(X,\cE^\vee\otimes\omega_X)[2]\otimes H^*(X,\cF) \otimes S^nH^*(X,\cO_X)\\
   \Ext^*(F'(\cE),F'(\cF)) &\simeq& H^*(X,\cE^\vee\otimes\omega_X)[2]\otimes H^*(X,\cF) \otimes S^{n+1}H^*(X,\cO_X).
 \end{eqnarray*}
\end{thm}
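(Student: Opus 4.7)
The plan is to apply Krug's extension formulas \cite[Theorem 3.17]{krug2014extension} after identifying how the BKR--Haiman equivalence $\Psi: \cD(X^{[n+1]}) \xra{\sim} \cD^{\frS_{n+1}}(X^{n+1})$ interacts with our two Fourier--Mukai functors $F'$ and $F''$. The first formula is then exactly Krug's theorem for a pair of tautological objects on a general smooth projective surface, and the remaining three are reduced to it.

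First I would record that Scala \cite{scala2009cohomology} gives an explicit $\frS_{n+1}$-equivariant resolution of $\Psi(F''(\cE))$ whose terms are supported on unions of partial diagonals of $X^{n+1}$, while the projection formula applied to the trivial kernel $\cO_{X \times X^{[n+1]}}$ gives
\[ F'(\cE) \simeq R\Gamma(X, \cE) \otimes \cO_{X^{[n+1]}}, \]
so that $\Psi \circ F'$ simply tensors $R\Gamma(X, \cE)$ with the equivariant structure sheaf $\cO_{X^{n+1}}$. Feeding the pair $(\Psi F''(\cE), \Psi F''(\cF))$ into Krug's computation yields the first formula directly. For the other three, the Ext group factors through either $\Ext^*(F''(-), \cO_{X^{[n+1]}})$ or $\Ext^*(\cO_{X^{[n+1]}}, F''(-))$, which are special cases of the first formula (up to the structure sequence relating $\cO_{X^{[n+1]}}$ and $F''(\cO_X)$); G\"ottsche's identification $H^*(X^{[n+1]}, \cO_{X^{[n+1]}}) \simeq S^{n+1} H^*(X, \cO_X)$ accounts for the $S^{n+1}$ factor in the fourth formula, and the shift $[2]$ together with the twist by $\omega_X$ in the third and fourth formulas are Serre duality on $X$ applied to $F'(\cE)$, since $R\Gamma(X, \cE)^\vee \simeq H^*(X, \cE^\vee \otimes \omega_X)[2]$.

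The main obstacle is Krug's theorem itself: its proof must track the two $\frS_{n+1}$-orbits of polydiagonal strata that contribute non-trivially to the equivariant Ext group---the generic (open) stratum, yielding $H^*(X, \cE^\vee) \otimes H^*(X, \cF) \otimes S^{n-1} H^*(X, \cO_X)$, and the big partial diagonal, yielding $\Ext^*(\cE, \cF) \otimes S^n H^*(X, \cO_X)$---while showing that all deeper polydiagonals contribute trivially. This combinatorial bookkeeping with $\frS_{n+1}$-invariants of sheaves supported on polydiagonals, together with the verification that the invariants correctly package into the stated symmetric powers, is the technical heart of \cite{krug2014extension}.
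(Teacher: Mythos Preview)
Your approach is essentially the paper's: it simply cites \cite[Theorem 3.17 and Remark 3.19]{krug2014extension} for the first two isomorphisms and \cite[Corollary 35]{scala2009some} with \cite[Remark 3.20]{krug2014extension} for the third, the fourth being immediate from $F'(\cE)\simeq H^*(X,\cE)\otimes\cO_{X^{[n+1]}}$ together with G\"ottsche's formula and Serre duality, exactly as you say. Two small corrections are worth recording. First, formulas two and three are not really ``special cases of the first via the structure sequence''---there is no direct exact sequence relating $\cO_{X^{[n+1]}}$ and $F''(\cO_X)$ that does this job---but rather parallel and strictly easier computations with the same equivariant machinery: after Krug's reduction to $C(\cE)=\bigoplus_i\pi_i^*\cE$, one pairs against $\cO_{X^{n+1}}$ instead of $C(\cF)$, so there is only one $\frS_{n+1}$-orbit to track. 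Second, your description of the two summands in the first formula as coming from ``the generic stratum'' and ``the big partial diagonal'' is not how Krug's argument actually runs: the decomposition is by the two $\frS_{n+1}$-orbits on index pairs $(i,j)$ in $\Ext^*(\bigoplus_i\pi_i^*\cE,\bigoplus_j\pi_j^*\cF)$, namely $i=j$ (yielding the $\Ext^*(\cE,\cF)\otimes S^nH^*(\cO_X)$ summand) and $i\neq j$ (yielding the other), rather than by a stratification of $X^{n+1}$ itself.
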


\begin{proof}
 See \cite[Theorem 3.17 and Remark 3.19]{krug2014extension} for the first two. 
 The third expression follows from \cite[Corollary 35]{scala2009some} and \cite[Remark 3.20]{krug2014extension}.
\end{proof}

 This result allows us to completely circumvent the main calculation in \cite[Sections 2.2 \& 2.3]{addington2011new}. More precisely, using adjunctions and the Yoneda lemma, we can determine the kernels of the four crucial compositions $R'F',R''F',R'F'',R''F''$ almost instantly. Indeed, if $\Phi_\cP$ denotes the Fourier-Mukai transform with kernel $\cP$ then $\Phi_{\cO_X\boxtimes\cO_X}(\cF)\simeq H^*(\cF)\otimes\cO_X$, $\Phi_{\cO_X\boxtimes\omega_X}(\cF)\simeq H^*(\cF)\otimes\omega_X$ and $\Phi_{\cO_\Delta}(\cF)\simeq\cF$. Thus, we see that $\Ext^*(\cE,\Phi_{\cO_X\boxtimes\cO_X}(\cF))\simeq H^*(\cE^\vee)\otimes H^*(\cF)$, $\Ext^*(\cE,\Phi_{\cO_X\boxtimes\omega_X}(\cF))\simeq H^*(\cE^\vee\otimes\omega_X)\otimes H^*(\cF)$ and $\Ext^*(\cE,\Phi_{\cO_\Delta}(\cF))\simeq \Ext^*(\cE,\cF)$. Therefore, by Theorem \ref{extformula}, we have the following identifications:
 \begin{table}[htbp!]
 \centering
 \resizebox{\columnwidth}{!}{
  \begin{tabular}{| c | c |}
    \hline
     & \\
    \multirow{2}{*}{$R''F' \simeq \cO_{X\times X}\otimes H^*(X^{[n]},\cO_{X^{[n]}})$} & \multirow{2}{*}{$R'F' \simeq (\cO_X\boxtimes \omega_X)\otimes H^*(X^{[n+1]},\cO_{X^{[n+1]}})[2]$} \\
     & \\
     & \\
    \hline
    \multirow{2}{*}{$R''F'' \simeq \cO_{\Delta}\otimes H^*(X^{[n]},\cO_{X^{[n]}})$} & \\
     & \multirow{2}{*}{$R'F'' \simeq (\cO_X\boxtimes \omega_X)\otimes H^*(X^{[n]},\cO_{X^{[n]}})[2]$} \\
    \hspace{2.1cm}\multirow{2}{*}{$\oplus\; \cO_{X\times X} \otimes H^*(X^{[n-1]},\cO_{X^{[n-1]}})$} & \\
     & \\
    \hline
  \end{tabular}}
 \end{table}

 As discussed in \S\ref{HilbA}, the natural map $F'\ra F''$ of functors is induced by the restriction map $\cO_{X\times X^{[n+1]}}\ra \cO_{\cZ_{n+1}}$ of kernels. Under the BKR isomorphism, this is equivalent to the restriction map $\cO_{X\times X^{n+1}}\ra \cO_{D}$ where $D:=\bigcup_{i=1}^{n+1}D_i$ is the union of all pairwise diagonals $D_i:=\Delta_{i,n+2}\subset X^{n+1}\times X$. In the notation introduced above, this is just saying that $\Psi F'\simeq\Phi_{\cO_{X\times X^{n+1}}}$ and $\Psi F''\simeq\Phi_{\cO_{D}}$; see \cite[Proposition 1.3.3 \& section 2.1]{scala2009cohomology} 
 for more details. By \cite[Remark 2.2.1]{scala2009cohomology}, we have a \v{C}ech-type $\frS_{n+1}$-equivariant 
 resolution $\cK^\bullet$ of $\cO_{D}$ given by 
 \begin{equation}\label{res}0\ra\cO_{D}\ra \bigoplus_{i=1}^{n+1}\cO_{D_i}\ra\cdots\ra\bigoplus_{|I|=k}\cO_{D_I}\ra\cdots\ra \cO_{D_{\{1,\ldots,n+1\}}}\ra0\end{equation}
where $D_I:=\bigcap_{i\in I}D_i$ is the partial diagonal corresponding to $\emptyset\neq I\subseteq\{1,\ldots,n+1\}$. Now, the main point of Theorem \ref{extformula} and its proof \cite[Proposition 3.12]{krug2014extension} is that only the zeroth term of this complex contributes to $\cH om(\Psi(F'(\cF)),\Psi(F''(\cF)))^{\frS_{n+1}}$. More precisely, \cite[Theorem 2.4.5]{scala2009cohomology} and \cite[Proposition 3.12]{krug2014extension} provide the following identifications 
\begin{eqnarray*}
\cH om(\cO_{X\times X^{n+1}},\Psi(F''(\cF)))^{\frS_{n+1}} &\simeq& \cH om(\cO_{X\times X^{n+1}},C(\cF))^{\frS_{n+1}}\\
\cH om(\Psi(F''(\cE)),\cO_{X\times X^{n+1}})^{\frS_{n+1}} &\simeq& \cH om(C(\cE),\cO_{X\times X^{n+1}})^{\frS_{n+1}}\\
\cH om(\Psi(F''(\cE)),\Psi(F''(\cF)))^{\frS_{n+1}} &\simeq& \cH om(C(\cE),C(\cF))^{\frS_{n+1}}
\end{eqnarray*}
where 
\[C:=\Phi_{\bigoplus_{i=1}^{n+1}\cO_{D_i}}\simeq \bigoplus_{i=1}^{n+1}\pi_i^*:\cD(X)\to\cD^{\frS_{n+1}}(X^{n+1}).\]
Replacing $\cO_{D}$ by $\cK^\bullet$, we see that our restriction map must factor through $\cO_{X\times X^{n+1}}\\\to\bigoplus_{i=1}^{n+1}\cO_{D_i}$ which is nothing but restriction componentwise. Therefore, the induced map $\Psi(F'(\cF))\ra\Psi(F''(\cF))$ is realised as the sum of $n+1$ evaluation maps 
 \begin{equation*}H^*(\cF)\otimes\cO_X^{\boxtimes(n+1)}\to \bigoplus_i\pi_i^*\cF\;;\;f\otimes(s_1\boxtimes\cdots\boxtimes s_{n+1})\mapsto \bigoplus_i s_1\boxtimes\cdots \boxtimes f(s_i)\boxtimes\cdots \boxtimes s_{n+1}\end{equation*} 
%
 Thus, the induced map $\Ext^*(F'(\cE),F'(\cF))\ra\Ext^*(F'(\cE),F''(\cF))$, under the isomorphism of Theorem \ref{extformula}, is given by 
 \begin{align*}
 H^*(\cE)^\vee\otimes H^*(\cF)\otimes S^{n+1}H^*(\cO_X) &\ra H^*(\cE)^\vee\otimes H^*(\cF)\otimes S^{n}H^*(\cO_X)\\ e\otimes f\otimes(s_1\otimes\cdots \otimes s_{n+1}) &\mapsto \sum_i e\otimes f(s_i)\otimes s_1\otimes\cdots \otimes \what{s_i}\otimes\cdots \otimes s_{n+1}
 \end{align*}
 and similarly for the map $\Ext^*(F''(\cE),F'(\cF))\ra\Ext^*(F''(\cE),F''(\cF))$. By duality, we get similar descriptions for the maps $\Ext^*(F''(\cE),F'(\cF))\ra\Ext^*(F'(\cE),F'(\cF))$ and $\Ext^*(F''(\cE),F''(\cF))\ra\Ext^*(F'(\cE),F''(\cF))$ respectively; so it is enough to understand the first one. 
 
 In particular, if $X$ is a K3 surface then each $2k^\trm{th}$ graded piece of $S^{n}H^*(X,\cO_X)$ 
 is one-dimensional for $0\leq k\leq n$ and the map described above must be an isomorphism. Indeed, if $0\neq u\in H^2(X,\cO_X)$ then a basis of $S^nH^*(X,\cO_X)$ is given by elements of the form $u^k\id^{n-k}$ for $k=0,\ldots,n$ and the components 
 \[H^*(\cE)^\vee\otimes H^*(\cF)\otimes u^k\id^{n+1-k} \to H^*(\cE)^\vee\otimes H^*(\cF)\otimes u^k\id^{n-k}\] of $\Ext^*(F'(\cE),F'(\cF))\ra\Ext^*(F'(\cE),F''(\cF))$ described above are isomorphisms. 
By duality, the components 
of $\Ext^*(F''(\cE),F'(\cF))\ra\Ext^*(F'(\cE),F'(\cF))$ must also be isomorphisms and similarly for the other maps. This means we can cancel these terms from the cones and rewrite the following diagram of triangles $$\xymatrix@=0.3em{\Ext^*(F''(\cE),F(\cF)) \ar[rr] \ar[dd] && \Ext^*(F'(\cE),F(\cF)) \ar[rr] \ar[dd] && \Ext^*(F(\cE),F(\cF)) \ar[dd]\\ \\ \Ext^*(F''(\cE),F'(\cF)) \ar[rr] \ar[dd] && \Ext^*(F'(\cE),F'(\cF)) \ar[rr] \ar[dd] && \Ext^*(F(\cE),F'(\cF)) \ar[dd]\\ \\ \Ext^*(F''(\cE),F''(\cF)) \ar[rr] && \Ext^*(F'(\cE),F''(\cF)) \ar[rr] && \Ext^*(F(\cE),F''(\cF))}$$
 as
 $$\resizebox{\columnwidth}{!}{\xymatrix@=0.3em{{\begin{array}{c}\bigoplus_{k=0}^n \Ext^*(\cE,\cF)[-2k-1]\\ \oplus H^*(\cE^\vee)\otimes H^*(\cF)[-2n]\end{array}} \ar[rr] \ar[dd] && H^*(\cE^\vee)\otimes H^*(\cF)[-2n] \ar[rr] \ar[dd] && \bigoplus_{k=0}^n\Ext^*(\cE,\cF)[-2k] \ar[dd]\\ \\ \bigoplus_{k=0}^n H^*(\cE^\vee)\otimes H^*(\cF)[-2k] \ar[rr] \ar[dd] && \bigoplus_{k=-1}^n H^*(\cE^\vee)\otimes H^*(\cF)[-2k] \ar[rr] \ar[dd] && H^*(\cE^\vee)\otimes H^*(\cF)[2] \ar[dd]\\ \\ {\begin{array}{c}\bigoplus_{k=0}^n\Ext^*(\cE,\cF)[-2k]\\ \bigoplus_{k=0}^{n-1} H^*(\cE^\vee)\otimes H^*(\cF)[-2k]\end{array}} \ar[rr] && \bigoplus_{k=-1}^{n-1} H^*(\cE^\vee)\otimes H^*(\cF)[-2k] \ar[rr] && {\begin{array}{c}\bigoplus_{k=0}^n\Ext^*(\cE,\cF)[-2k+1]\\ \oplus H^*(\cE^\vee)\otimes H^*(\cF)[2]\end{array}}}}$$
 That is, $\Ext^*(\cE,RF(\cF))\simeq\Ext^*(F(\cE),F(\cF))\simeq\bigoplus_{k=0}^n\Ext^*(\cE,\cF)[-2k]$ and so by Yoneda, we have $$RF\simeq\bigoplus_{k=0}^n\cO_\Delta[-2k]$$ 
 \bigskip
 
 Similar arguments when $X$ is an abelian surface recover the expressions of \S\ref{HilbA}. For the case of the generalised Kummer variety, we play the same game with a `BKR-type' isomorphism $\Psi_K:\cD(K_n)\xra{\sim}\cD^{\frS_{n+1}}(N)$ where $j:N\hra A^{n+1}$ is the locus of points which sum to zero. Consider the following diagram:
 \[\xymatrix{
 & I^{n+1}A \ar[rr]^(0.3)p\ar@{->}'[d]^-q[dd] & & A^{n+1} \ar[dd]^(0.3)\pi\\
 K_n\times N \ar@{^{(}->}[ur]^-k\ar[rr]^(0.75){\bar{p}}\ar[dd]^(0.3){\bar{q}} & & N \ar@{^{(}->}[ur]^-j\ar@{->}[dd]^(0.3){\bar{\pi}}\\
 & A^{[n+1]} \ar@{->}'[r]^(0.5)\mu[rr] & & S^{n+1}A\\
 K_n \ar@{^{(}->}[ur]^-i\ar[rr]^(0.7){\bar{\mu}} & & N/\frS_{n+1} \ar@{^{(}->}[ur]_-\ell
 }\]
 
 \begin{lem}\label{BKRtype}
There is a `BKR-type' equivalence $\Psi_K:\cD(K_n)\xra\sim\cD^{\frS_{n+1}}(N)$ which naturally intertwines with $\Psi:\cD(A^{[n+1]})\xra\sim\cD^{\frS_{n+1}}(A^{n+1})$. That is, we have 
\[\Psi_K\circ i^*\simeq j^*\circ\Psi.\]
In particular, this provides us with the identities $\Psi_KF_K'\simeq\Phi_{\cO_{A\times N}}$, $\Psi_KF_K''\simeq\Phi_{\bar{D}}$ as well as an $\frS_{n+1}$-equivariant resolution $\bar{\cK}^\bullet$ of $\bar{D}:=(\id_A\times j)^{-1}D$ given by \[0\to\cO_{\bar{D}}\to\bigoplus_{i=1}^{n+1}\cO_{\bar{D}_i}\to\cdots\ra\bigoplus_{|I|=k}\cO_{\bar{D}_I}\to\cdots\to\cO_{\bar{D}_{\{1,\ldots,n+1\}}}\to0\] where $\bar{D}_I:=(\id_A\times j)^{-1}(D_I)$ are the restricted diagonals.
 \end{lem}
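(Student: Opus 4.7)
My plan is to deduce everything from the BKR--Haiman theorem by restricting to the Kummer fibre. Since the back face of the cube (defining $I^{n+1}A$) is a reduced Cartesian square, base change identifies the variety labelled $K_n\times N$ in the diagram with the reduced fibre product
\[K_n\times_{N/\frS_{n+1}} N \;=\; q^{-1}(K_n) \;=\; p^{-1}(N),\]
so that all four side faces of the cube become Cartesian and the maps $k,\bar p,\bar q,\bar\mu$ are the induced projections and inclusions.

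Next I apply Bridgeland--King--Reid to the map $\bar\mu:K_n\to N/\frS_{n+1}$, which is a crepant resolution because $\mu$ is and $K_n$ is holomorphic symplectic. The BKR dimension bound on the double fibre product $(K_n\times N)\times_{K_n}(K_n\times N)$ is inherited from Haiman's verification for the full isospectral scheme combined with a transversality check along the singular strata of $N/\frS_{n+1}$: since the codimension-two slice $K_n\hra A^{[n+1]}$ has trivial normal bundle and matches with $N/\frS_{n+1}\hra S^{n+1}A$ under $\mu$, slicing Haiman's double fibre product drops its dimension by the required amount. Bridgeland--King--Reid then produces the equivalence
\[\Psi_K:\cD(K_n)\xra\sim\cD^{\frS_{n+1}}(N),\]
defined via the Fourier--Mukai kernel $\cO_{K_n\times N}$, exactly as $\Psi$ is defined via $\cO_{I^{n+1}A}$.

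The intertwining is a double application of base change around the cube: Cartesianness of the left face gives $\bar q^*\circ i^*\simeq k^*\circ q^*$, while Cartesianness of the top face together with properness of $p$ and the Tor-independence established above gives $\bar p_*\circ k^*\simeq j^*\circ p_*$. Composing and taking $\frS_{n+1}$-invariants yields $\Psi_K\circ i^*\simeq j^*\circ\Psi$. The kernel identifications $\Psi_K F_K'\simeq\Phi_{\cO_{A\times N}}$ and $\Psi_K F_K''\simeq\Phi_{\cO_{\bar D}}$ then follow immediately from $F_K'\simeq i^*F'$, $F_K''\simeq i^*F''$ together with the identifications $\Psi F'\simeq\Phi_{\cO_{A\times A^{n+1}}}$ and $\Psi F''\simeq\Phi_{\cO_D}$ recalled in \S\ref{equiv}. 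Finally, pulling back Scala's \v{C}ech resolution \eqref{res} along $\id_A\times j$ produces $\bar\cK^\bullet$; exactness is preserved term-by-term because on each partial diagonal $D_I$ the equation $\sum_i y_i = 0$ cutting out $A\times N$ restricts to $|I|\cdot x + \sum_{j\notin I} y_j = 0$, a non-zero group homomorphism (hence \'etale and in particular transverse), so every higher Tor of $\cO_{D_I}$ with $\cO_{A\times N}$ vanishes.

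The main obstacle is the BKR dimension estimate: although morally inherited from Haiman's theorem, checking that the bound survives restriction to the deep singular strata of $N/\frS_{n+1}$ (where many points collide) requires a careful transversality argument. Once this is in place, the Tor-independence needed for base change and for pulling back the \v{C}ech resolution reduces to the same geometric input.
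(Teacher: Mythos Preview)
Your strategy diverges from the paper's at the crucial step of showing $\Psi_K$ is an equivalence, and the divergence is exactly where your acknowledged obstacle sits. You try to invoke Bridgeland--King--Reid directly for $\bar\mu:K_n\to N/\frS_{n+1}$, which requires the dimension bound on $K_n\times_{N/\frS_{n+1}}K_n$; you concede this is not established. There is a further subtlety you pass over: BKR produces an equivalence whose kernel is the universal family of $\frS_{n+1}$-clusters on $N$, and one must still identify this with the restricted isospectral scheme $q^{-1}(K_n)$ before your intertwining-by-base-change applies to the $\Psi_K$ so obtained. The paper's own Remark immediately following the lemma records that the author is \emph{unable} to prove this identification, so your route is not merely incomplete but runs into a known difficulty.

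The paper sidesteps both issues. It simply \emph{defines} $\Psi_K$ as the Fourier--Mukai functor with kernel $k^*\cO_{I^{n+1}A}$, then cites \cite[Lemma~6.1]{chen2002flops} to obtain $\Psi\circ i_*\simeq j_*\circ\Psi_K$ and \cite[Proposition~6.2]{chen2002flops} to conclude that $\Psi_K$ is an equivalence because $\Psi$ is; taking left adjoints gives $\Psi_K\circ i^*\simeq j^*\circ\Psi$ with no BKR verification on the Kummer side. Your handling of the kernel identities $\Psi_KF_K'\simeq\Phi_{\cO_{A\times N}}$, $\Psi_KF_K''\simeq\Phi_{\cO_{\bar D}}$ and of the pulled-back \v{C}ech resolution is essentially the same as the paper's (which phrases the acyclicity via the codimension criterion of \cite[Corollary~2.27]{kuznetsov2006hyperplane} and \cite[Lemma~2]{scala2009some}); your explicit group-homomorphism transversality argument for the latter is a pleasant concrete alternative.
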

 \begin{proof}
Let $\Psi_K:\cD(K_n)\to\cD^{\frS_{n+1}}(N)$ be the Fourier-Mukai functor induced by $k^*\cO_{I^{n+1}A}$ where $I^{n+1}A:=(A^{[n+1]}\times_{S^nA} A^{n+1})_\mrm{red}$ 
is Haiman's isospectral Hilbert scheme; the BKR isomorphism $\Psi$ is given by $\Phi_{\cO_{I^{n+1}A}}:\cD(A^{[n+1]})\xra\sim\cD^{\frS_{n+1}}(A^{n+1})$. Applying \cite[Lemma 6.1]{chen2002flops} we see that \[\Psi\circ i_*\simeq j_*\circ \Psi_K.\] Furthermore, by \cite[Proposition 6.2]{chen2002flops}, we know that $\Psi_K$ must be an equivalence and so taking left adjoints proves the first claim. For the second identity, recall from \S\ref{genkum} that $F_K''\simeq i^*F''$ and so $\Psi_K F_K''\simeq \Psi_K i^*F''\simeq j^*\Psi F''\simeq j^*\Phi_{\cO_{D}}$. Now, since $\codim_{D}\bar{D}=2=\codim_{A\times A^{n+1}}A\times N$, we can use base change \cite[Corollary 2.27]{kuznetsov2006hyperplane} to conclude that $j^*\Phi_{\cO_{D}}\simeq \Phi_{\cO_{\bar{D}}}$; the first identity follows from similar arguments. Next, observe that $\codim_{D_I}\bar{D}_I$ and so by \cite[Lemma 2]{scala2009some} (or again by \cite[Corollary 2.27]{kuznetsov2006hyperplane}) we see that all the objects of \eqref{res} are $(\id_A\times j)^*$-acyclic and so the sequence remains exact after pulling back.
 \end{proof}
 \begin{rmk}
 We expect that the $\Psi_K$ defined above agrees with the BKR isomorphism induced by the universal family of $\frS_{n+1}$-clusters on $N$, or equivalently, the restricted isospectral Hilbert scheme $\overline{I^{n+1}A}:=q^{-1}(K_n)$ where $q:I^{n+1}A\to A^{[n+1]}$, but we are unable to give a formal proof of this `fact'. 
 \end{rmk}
 
 Since $\cH om$ is compatible with pullback \cite[Compatibilities (vi) p.85]{huybrechts2006fourier}, it follows immediately from \cite[Theorem 2.4.5]{scala2009cohomology} and \cite[Proposition 3.12]{krug2014extension} that we have natural isomorphisms
\begin{eqnarray*}
\cH om(\cO_{A\times N},\Psi_K(F_K''(\cF)))^{\frS_{n+1}} &\simeq& \cH om(\cO_{A\times N},\bar{C}(\cF))^{\frS_{n+1}}\\
\cH om(\Psi_K(F_K''(\cE)),\cO_{A\times N})^{\frS_{n+1}} &\simeq& \cH om(\bar{C}(\cE),\cO_{A\times N})^{\frS_{n+1}}\\
\cH om(\Psi_K(F_K''(\cE)),\Psi_K(F_K''(\cF)))^{\frS_{n+1}} &\simeq& \cH om(\bar{C}(\cE),\bar{C}(\cF))^{\frS_{n+1}}
\end{eqnarray*}
where 
\[\bar{C}:=\Phi_{\bigoplus_{i=1}^{n+1}\cO_{\bar{D}_i}}\simeq \bigoplus_{i=1}^{n+1}\bar{\pi}_i^*:\cD(A)\to\cD^{\frS_{n+1}}(N)\] 
and $\bar{\pi}_i:=\pi_i\circ j:N\subset A^{n+1}\to A$ are the natural projections restricted to $N$. This, in turn, gives rise to natural isomorphisms 
\begin{eqnarray*}
\Ext^*(\Psi_K(F_K'(\cE)),\Psi_K(F_K''(\cF)))^{\frS_{n+1}} &\simeq& \Ext^*(\Psi_K(F_K'(\cE)),\bar{C}(\cF))^{\frS_{n+1}}\\
\Ext^*(\Psi_K(F_K''(\cE)),\Psi_K(F_K'(\cF)))^{\frS_{n+1}} &\simeq& \Ext^*(\bar{C}(\cE),\Psi_K(F_K'(\cF)))^{\frS_{n+1}}\\
\Ext^*(\Psi_K(F_K''(\cE)),\Psi_K(F_K''(\cF)))^{\frS_{n+1}} &\simeq& \Ext^*(\bar{C}(\cE),\bar{C}(\cF))^{\frS_{n+1}}.
\end{eqnarray*}
In particular, it is only the zeroth term of the $\frS_{n+1}$-equivariant resolution $\bar{\cK}^\bullet$ which contributes to $\cH om(\Psi_K(F_K''(\cE)),\Psi_K(F_K''(\cF)))^{\frS_{n+1}}$ and the other compositions. Applying the arguments of \cite[Theorem 3.17]{krug2014extension} provides the following identities
\begin{eqnarray}
\Ext^*(\Psi_K(F_K'(\cE)),\Psi_K(F_K''(\cF))) &\simeq& H^*(A,\cE^\vee)\otimes \Ext^*(\cO_N,\bar{\pi}_{n+1}^*(\cF))^{S_n}\label{F'F''}\\
\Ext^*(\Psi_K(F_K''(\cE)),\Psi_K(F_K'(\cF))) &\simeq& H^*(A,\cF)\label{F''F'}\otimes \Ext^*(\bar{\pi}_{n+1}^*(\cE),\cO_N)^{S_n}\\
\Ext^*(\Psi_K(F_K''(\cE)),\Psi_K(F_K''(\cF))) &\simeq& \Ext^*(\bar{\pi}_{n+1}^*(\cE),\bar{\pi}_{n+1}^*(\cF))^{S_n}\label{F''F''} \\&& \oplus\; \Ext^*(\bar{\pi}_{n+1}^*(\cE),\bar{\pi}_{n}^*(\cF))^{S_{n-1}}.\nonumber
 \end{eqnarray}
 
Now, at this stage of the proof in the Hilbert scheme case, Krug uses the natural $S_n$-equivariant isomorphism $A^{n+1}\simeq A^n\times A$ together with the K\"unneth formula to obtain
$$\resizebox{\columnwidth}{!}{ $\Ext^*(\pi_{n+1}(\cE),\pi_{n+1}(\cF))^{S_n}\simeq H^*(A^n,\cO_{A^n})^{S_n}\otimes \Ext^*(\cE,\cF)\simeq H^*(A^{[n]},\cO_{A^{[n]}})\otimes \Ext^*(\cE,\cF).$}$$
However, if $N_n\subset A^{n+1}$ denotes the locus of points which sum to zero then the lack of an $S_n$-equivariant isomorphism $N_n\simeq N_{n-1}\times A$ prevents us from doing the same. To remedy this, we observe that pullback along the summation map $\Sigma_n:A^{n}\to A$ coincides with pullback along the Albanese map $m:A^{[n]}\to A$ under the BKR equivalence $\Psi:=\Phi_{\cO_{I^{n}A}}$; this allows us to induce Lemma \ref{splitting} and finish the proof.
 
 Consider the following diagram 
 \[\xymatrix{ && I^{n}A \ar[drr]^-p \ar[dll]_-q &&\\
  A^{[n]} \ar[drr]^-\mu\ar[dddrr]_-m \ar@{}[ddrr]|(0.75)\circlearrowleft &&\circlearrowleft&& A^{n}\ar[dll]_-\pi\ar[dddll]^-{\Sigma_n} \ar@{}[ddll]|(0.75)\circlearrowleft \\ 
 && S^{n}A \ar[dd]^-\tau &&\\
  && &&\\
  && A && }\]
 where $\tau:S^{n}A\to A$ denotes the summation map so as not to conflict with $\Sigma_n$.

\begin{lem}\label{summation}
$\Psi\circ m^*\simeq\Sigma_n^*:\cD(A)\to\cD^{\frS_{n}}(A^{n})$.
\end{lem}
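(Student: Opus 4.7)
The plan is to exploit the commutativity of the given diagram together with a projection-formula calculation to reduce the statement to a standard identity for the BKR kernel. The key geometric observation is that $m\circ q = \Sigma_n\circ p$: indeed, $\mu\circ q = \pi\circ p$ by the fibre-product definition of $I^nA$, and then
\[m\circ q \;=\; \tau\circ\mu\circ q \;=\; \tau\circ\pi\circ p \;=\; \Sigma_n\circ p.\]

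First I would write the Fourier-Mukai transform with kernel $\cO_{I^nA}$ explicitly as $\Psi(\cF)\simeq Rp_*q^*\cF$, which is valid because the kernel is supported on the closed subscheme $I^nA\hookrightarrow A^{[n]}\times A^n$. Applying this to $\cF = m^*\cE$ for $\cE\in\cD(A)$ and invoking the commutativity above gives
\[\Psi(m^*\cE)\;\simeq\; Rp_*q^*m^*\cE\;\simeq\; Rp_*p^*\Sigma_n^*\cE\;\simeq\; \Sigma_n^*\cE\otimes^L Rp_*\cO_{I^nA}\]
by the projection formula. The equivariant structures match automatically because $m$ factors through the quotient $S^nA$ while $\Sigma_n$ is manifestly $\frS_n$-invariant, so both $m^*\cE$ and $\Sigma_n^*\cE$ acquire their natural (tautological) $\frS_n$-equivariant structures and the isomorphism constructed above is $\frS_n$-equivariant by naturality.

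The proof then reduces to the single identity $Rp_*\cO_{I^nA}\simeq\cO_{A^n}$ as $\frS_n$-equivariant complexes, and this is what I expect to be the main (non-)obstacle. It is, however, precisely the assertion $\Psi(\cO_{A^{[n]}})\simeq\cO_{A^n}$, which is a defining property of the Haiman/BKR equivalence built from the universal family of $\frS_n$-clusters: under the interpretation of $A^{[n]}$ as the $\frS_n$-Hilbert scheme of $A^n$ with universal family $I^nA$, the structure sheaf of the Hilbert scheme corresponds to the structure sheaf of $A^n$ with its tautological equivariant structure. One can either cite this directly, or verify it concretely using Haiman's results that $I^nA$ is Cohen--Macaulay with $\cO$-connected fibres over $A^n$ and no higher direct images of $\cO_{I^nA}$. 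Once this identity is in hand, the three displayed isomorphisms combine to give $\Psi\circ m^*\simeq \Sigma_n^*$.
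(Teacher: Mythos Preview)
Your proof is correct and follows essentially the same route as the paper: both arguments use the commutativity $m\circ q=\Sigma_n\circ p$ (derived exactly as you do via $\mu\circ q=\pi\circ p$ and the factorisations through $\tau$) together with the identity $Rp_*\cO_{I^nA}\simeq\cO_{A^n}$, which the paper cites from \cite[Proposition~1.3.3]{scala2009cohomology}. Your use of the projection formula to unwind $p_*p^*$ is just a slightly more explicit rendering of the paper's one-line assertion $p_*p^*\simeq\id_{A^n}$.
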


\begin{proof}
By \cite[Proposition 1.3.3]{scala2009cohomology}, we have $p_*\cO_{I^{n}A}\simeq\cO_{A^{n}}$ and so $p_*p^*\simeq\id_{A^n}$. The claim now follows from the commutativity of the previous diagram:
\[\Psi\circ m^*\simeq p_*q^*\mu^*\tau^*\simeq p_*p^*\pi^*\tau^*\simeq \pi^*\tau^*\simeq\Sigma_n^*.\qedhere\]
\end{proof}

\begin{cor}\label{firstsummand}
$\Sigma_{n*}\Sigma_n^*\simeq \id_A\otimes H^*(K_{n-1},\cO_{K_{n-1}})$.
\end{cor}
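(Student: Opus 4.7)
The plan is to transport the identity of Lemma \ref{splitting} across the BKR identification supplied by Lemma \ref{summation} by means of a short adjunction argument. First I would fix the convention that $\Sigma_{n*}\colon\cD^{\frS_n}(A^n)\to\cD(A)$ appearing in the statement denotes the right adjoint of $\Sigma_n^*\colon\cD(A)\to\cD^{\frS_n}(A^n)$; concretely, this is ordinary pushforward followed by taking $\frS_n$-invariants, which makes sense since $A$ carries the trivial $\frS_n$-action. This is the interpretation forced on us by the $\frS_n$-invariant $\Ext$-computations \eqref{F'F''}, \eqref{F''F'} and \eqref{F''F''} that the corollary is intended to simplify; indeed, the ordinary non-equivariant pushforward $\Sigma_{n*}\cO_{A^n}$ has fibres $H^*(A^{n-1},\cO_{A^{n-1}})$, which is manifestly the wrong graded vector space.

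Next I would take right adjoints on both sides of the isomorphism $\Sigma_n^*\simeq\Psi\circ m^*$ furnished by Lemma \ref{summation}. Since $\Psi$ is an equivalence, its quasi-inverse $\Psi^{-1}$ serves as both left and right adjoint, and the right adjoint of $m^*$ is the ordinary pushforward $m_*$. This yields $\Sigma_{n*}\simeq m_*\circ\Psi^{-1}$, so composing back with the original identification gives
\[\Sigma_{n*}\Sigma_n^*\;\simeq\;m_*\Psi^{-1}\Psi m^*\;\simeq\;m_*m^*.\]

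It only remains to apply the projection formula and Lemma \ref{splitting}: for any $\cE\in\cD(A)$,
\[\Sigma_{n*}\Sigma_n^*\cE\;\simeq\;\cE\otimes m_*\cO_{A^{[n]}}\;\simeq\;\cE\otimes\bigoplus_{i=0}^{n-1}\cO_A[-2i]\;\simeq\;\cE\otimes H^*(K_{n-1},\cO_{K_{n-1}}),\]
which is the desired identity. I do not foresee any real obstacle: once the convention on $\Sigma_{n*}$ is pinned down, the rest is a two-line formal manipulation of adjoints. The only step that needs a moment's thought is precisely that convention, which is what legitimises taking adjoints of the isomorphism in Lemma \ref{summation} in the first place.
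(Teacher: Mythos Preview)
Your proposal is correct and matches the paper's proof essentially line for line: take right adjoints in Lemma \ref{summation} to get $\Sigma_{n*}\simeq m_*\Psi^{-1}$, compose to obtain $\Sigma_{n*}\Sigma_n^*\simeq m_*m^*$, and invoke Lemma \ref{splitting}. Your care about the equivariant convention on $\Sigma_{n*}$ is exactly the point the paper addresses in the remark immediately following the proof.
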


\begin{proof}
By Lemma \ref{splitting}, we have $m_*m^*\simeq\id_A\otimes H^*(K_{n-1},\cO_{K_{n-1}})$. Combining this with Lemma \ref{summation}, we get
\[\Sigma_{n*}\Sigma_n^* \simeq m_*\Psi^{-1}\Psi m^* \simeq m_*m^* \simeq \id_A\otimes H^*(K_{n-1},\cO_{K_{n-1}}).\qedhere\]
\end{proof}

%

\begin{rmk}
To be formally correct, $\Psi$ should be replaced with $\Phi_{\cO_{I^{n}A}}\circ \mrm{triv}$ where $\mrm{triv}:\cD(A)\to\cD^{\frS_{n}}(A)$ equips every object with the trivial $\frS_{n}$-linearisation. Then $\Sigma_n^*$ becomes $\Sigma_n^*\circ\mrm{triv}$ and $\Sigma_{n*}$ becomes $(\_)^{\frS_{n}}\circ\Sigma_{n*}$. Recall that taking invariants $(\_)^{\frS_{n}}$ is right adjoint to $\mrm{triv}$ and $(\_)^{\frS_{n}}\circ\mrm{triv}\simeq\id_A$.
\end{rmk}

\begin{rmk}
Notice that Lemma \ref{summation} also shows that $\Sigma_n^*:\cD(A)\to\cD^{S_n}(A^n)$ is a $\bbP^{n-1}$-functor with twist $P_{\Sigma_n^*}\simeq \Psi\circ P_{m^*}\circ \Psi^{-1}$. Indeed, this follows immediately from Theorem \ref{Albanese} and the definitions in \S\ref{defs}; see \cite[Lemma 2.3]{krug2013new}.
\end{rmk}

Now, if we use the $S_n$-equivariant isomorphism \[A^n\simeq N\;;\;(x_1,\ldots,x_n)\mapsto (x_1,\ldots,x_n,-\sum_{i=1}^nx_i)\] then we see immediately that $\bar{\pi}_{n+1}$ can be identified (up to sign) with $\Sigma_n$. In particular, Corollary \ref{firstsummand} says that
\[(\bar{\pi}_{n+1*}\cO_N)^{S_n}\simeq(\Sigma_{n*}\cO_{A^n})^{S_n}\simeq \cO_A\otimes H^*(K_{n-1},\cO_{K_{n-1}})\simeq \bigoplus_{i=0}^{n-1}\cO_A[-2i]\] 
which allows us to simplify equations \eqref{F'F''}, \eqref{F''F'} and the first summand of \eqref{F''F''}. For the second summand of \eqref{F''F''}, we have

\begin{cor}\label{secondsummand}
$(\bar{\pi}_{n+1*}\bar{\pi}_{n}^*\cF)^{S_{n-1}}\simeq \cO_A\otimes H^*(A,\cF)\otimes H^*(K_{n-2},\cO_{K_{n-2}})$.
\end{cor}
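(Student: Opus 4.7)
The plan is to transport the question onto $A^n$ via the $S_{n-1}$-equivariant isomorphism $A^n\simeq N$ already invoked in the paragraph preceding Corollary \ref{firstsummand}, under which $\bar\pi_{n+1}$ becomes (up to the involution $\iota$, which is harmless since $\iota_*\cO_A\simeq\cO_A$) the summation map $\Sigma_n:A^n\to A$, while $\bar\pi_n$ becomes the projection $\pi_n:A^n\to A$ onto the last factor and $S_{n-1}$ acts by permuting the first $n-1$ factors of $A^n$. It therefore suffices to compute $(\Sigma_{n*}\pi_n^*\cF)^{S_{n-1}}$.

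To this end, write $A^n=A^{n-1}\times A$ and introduce the $S_{n-1}$-equivariant translation automorphism
\[\Phi:A^{n-1}\times A\xrightarrow{\;\sim\;}A^{n-1}\times A,\qquad(y,z)\mapsto(y,\,z-\Sigma_{n-1}(y)),\]
which converts $\Sigma_n$ into the projection $p_A$ onto the $A$-factor and $\pi_n$ into the composition $\alpha=m\circ\beta$, where $\beta:=(-\Sigma_{n-1})\times\id_A:A^{n-1}\times A\to A\times A$ and $m:A\times A\to A$ denotes the group law. Since $p_A=p_2^{A\times A}\circ\beta$, the projection formula along $\beta$ combined with K\"unneth delivers
\[\Sigma_{n*}\pi_n^*\cF\;\simeq\;p_{2*}^{A\times A}\bigl(m^*\cF\otimes\iota_*(\Sigma_{n-1*}\cO_{A^{n-1}})\boxtimes\cO_A\bigr).\]

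The main input is then Corollary \ref{firstsummand} with $n$ replaced by $n-1$ and $\cE=\cO_A$, yielding $(\Sigma_{n-1*}\cO_{A^{n-1}})^{S_{n-1}}\simeq\cO_A\otimes H^*(K_{n-2},\cO_{K_{n-2}})$. Because $S_{n-1}$ acts only on the $A^{n-1}$-factor, the invariants commute with $p_{2*}^{A\times A}$, with tensoring by $m^*\cF$, and with the external product $\boxtimes\cO_A$; combined with $\iota_*\cO_A\simeq\cO_A$ this reduces the computation to
\[p_{2*}^{A\times A}m^*\cF\,\otimes\,H^*(K_{n-2},\cO_{K_{n-2}}).\]
The remaining pushforward is standard: the translation automorphism $(a,b)\mapsto(a+b,b)$ of $A\times A$ intertwines $m$ with the first projection while preserving $p_2$, so K\"unneth gives $p_{2*}m^*\cF\simeq\cO_A\otimes H^*(A,\cF)$, and the three factors assemble into exactly the stated formula.

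The main subtlety I anticipate lies in the reduction in the third paragraph: one only has control over the $S_{n-1}$-invariants of $\Sigma_{n-1*}\cO_{A^{n-1}}$ from Corollary \ref{firstsummand}, not over the (potentially larger) whole pushforward. This is nonetheless enough because $S_{n-1}$ acts only on the $A^{n-1}$-factor of $A^{n-1}\times A$, so the projection formula and the external product isolate the action on that single tensor slot and allow invariants to be taken there without any residual higher isotypic contributions interfering with the K\"unneth decomposition used above.
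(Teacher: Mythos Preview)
Your proof is correct and follows essentially the same strategy as the paper: transport to $A^n$, factor $\Sigma_n$ through $\Sigma_{n-1}$ on the first $n-1$ coordinates so that the $S_{n-1}$-action is isolated there, invoke Corollary~\ref{firstsummand} for $(\Sigma_{n-1*}\cO_{A^{n-1}})^{S_{n-1}}$, and finish with a K\"unneth-type computation. The only organisational difference is that the paper uses the factorisation $\Sigma_n\simeq\Sigma_2\circ(\Sigma_{n-1}\times\id_A)$ directly and then identifies $\Sigma_{2*}\pi_2^*$ with $\Phi_{\cO_{A\times A}}$, whereas you first apply the shear $\Phi$ to straighten $\Sigma_n$ into a projection and then push forward along $\beta$; these are two presentations of the same manoeuvre.
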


\begin{proof}
Observe that $\bar{\pi}_n$ coincides with $\pi_n$ under the isomorphism $A^n\simeq N$ described above and $\Sigma_n\simeq \Sigma_2\circ(\Sigma_{n-1}\times\id_A)$. Then 
\begin{align*}
(\bar{\pi}_{n+1*}\bar{\pi}_{n}^*\cF)^{S_{n-1}} &\simeq (\Sigma_{n*}\pi_n^*\cF)^{S_{n-1}}\\
& \simeq (\Sigma_{2*}(\Sigma_{n-1}\times\id_A)_*(\cO_{A^{n-1}}\boxtimes \cF))^{S_{n-1}} \\
& \simeq \Sigma_{2*}((\Sigma_{n-1*}\cO_{A^{n-1}})^{S_{n-1}} \boxtimes \cF) \\
& \simeq \Sigma_{2*}(\cO_A\boxtimes \cF) \otimes H^*(K_{n-2},\cO_{K_{n-2}})\quad\trm{by Corollary \ref{firstsummand}}\\
& \simeq \Sigma_{2*}\pi_2^* \cF \otimes H^*(K_{n-2},\cO_{K_{n-2}})\\
&\simeq \cO_A\otimes H^*(A,\cF)\otimes H^*(K_{n-2},\cO_{K_{n-2}})
\end{align*}
where the last line uses the fact that $\Sigma_{2*}\pi_2^*\simeq\Phi_{\cO_{A\times A}}$. Indeed, $\Gamma_{\pi_2}\times A$ and $A\times \Gamma_{\Sigma_{2}}$ intersect transversally in the subvariety $\{(b,a,b,a+b)\}\simeq A\times A$ and so the claim follows from \cite[Proposition 5.10]{huybrechts2006fourier}. 
\end{proof}

Thus, we have proved the following

 \begin{thm}\label{extformulakummer}
 Let $A$ be an abelian surface and consider the tautological objects $F_K''(\cE), F_K''(\cF)\in\cD(K_n)$ associated to $\cE,\cF\in\cD(A)$. Then we have the following natural isomorphisms of graded vector spaces
 \begin{eqnarray*}\Ext^*(F_K''(\cE),F_K''(\cF)) &\simeq& \Ext^*(\cE,\cF)\otimes H^*(K_{n-1},\cO_{K_{n-1}})\\ && \oplus\; H^*(A,\cE^\vee)\otimes H^*(A,\cF) \otimes H^*(K_{n-2},\cO_{K_{n-2}})\\
\Ext^*(F_K''(\cE),F_K'(\cF)) &\simeq& H^*(A,\cE^\vee)\otimes H^*(A,\cF) \otimes H^*(K_{n-1},\cO_{K_{n-1}})\\
\Ext^*(F_K'(\cE),F_K''(\cF)) &\simeq& H^*(A,\cE^\vee)[2]\otimes H^*(A,\cF) \otimes H^*(K_{n-1},\cO_{K_{n-1}})\\
\Ext^*(F_K'(\cE),F_K'(\cF)) &\simeq& H^*(A,\cE^\vee)[2]\otimes H^*(A,\cF) \otimes H^*(K_{n},\cO_{K_{n}}).
 \end{eqnarray*}
\end{thm}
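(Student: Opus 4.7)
The proof is essentially bookkeeping once Lemma \ref{BKRtype} and Corollaries \ref{firstsummand} and \ref{secondsummand} are in hand. The strategy is to transport each of the four $\Ext$-groups through $\Psi_K$ to the equivariant category $\cD^{\frS_{n+1}}(N)$, then replace $\Psi_K F_K''(-)\simeq\Phi_{\cO_{\bar{D}}}$ by the resolution $\bar{\cK}^\bullet$ and observe that only the zeroth term $\bar{C}\simeq\bigoplus_{i=1}^{n+1}\bar{\pi}_i^*$ contributes to $\frS_{n+1}$-invariants. This reduces three of the four $\Ext$-groups to the $S_n$- or $S_{n-1}$-invariants already extracted by the author in \eqref{F'F''}, \eqref{F''F'}, and \eqref{F''F''}; the remaining task is to evaluate these invariants.

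For the first formula, adjunction together with the projection formula turn $\Ext^*(\bar{\pi}_{n+1}^*\cE,\bar{\pi}_{n+1}^*\cF)^{S_n}$ into $\Ext^*(\cE,\cF\otimes(\bar{\pi}_{n+1*}\cO_N)^{S_n})$, and Corollary \ref{firstsummand}---via the identification of $\bar{\pi}_{n+1}$ with $\Sigma_n$ under $N\simeq A^n$---produces the factor $H^*(K_{n-1},\cO_{K_{n-1}})$; the second summand of \eqref{F''F''} is precisely Corollary \ref{secondsummand}, paired with $H^*(A,\cE^\vee)$ via Yoneda. For the two mixed formulas, the same adjunction plus Corollary \ref{firstsummand} evaluates both $\Ext^*(\bar{\pi}_{n+1}^*\cE,\cO_N)^{S_n}$ and $\Ext^*(\cO_N,\bar{\pi}_{n+1}^*\cF)^{S_n}$ as $H^*(A,-)\otimes H^*(K_{n-1},\cO_{K_{n-1}})$. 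The asymmetric $[2]$-shift appearing in formulas 3 and 4 is supplied by Serre duality on $A$: since $\omega_A\simeq\cO_A$, one has a natural identification $H^*(A,\cE)^\vee\simeq H^*(A,\cE^\vee)[2]$ of graded vector spaces, and this enters precisely when $F_K'$ occupies the first slot of the $\Ext$.

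For the fourth formula no resolution is needed at all: $\Psi_K F_K'(\cE)\simeq H^*(A,\cE)\otimes\cO_N$ (with trivial $\frS_{n+1}$-linearisation), so
\[\Ext^*(F_K'(\cE),F_K'(\cF)) \simeq H^*(A,\cE)^\vee\otimes H^*(A,\cF)\otimes H^*(N,\cO_N)^{\frS_{n+1}},\]
and $H^*(N,\cO_N)^{\frS_{n+1}}\simeq H^*(K_n,\cO_{K_n})$ follows from $\Psi_K(\cO_{K_n})\simeq\cO_N$. The genuine computational obstacle has already been absorbed into Corollary \ref{secondsummand}, whose proof relies crucially on Lemma \ref{summation} (the coincidence between $\Sigma_n^*$ and $m^*$ under the BKR equivalence). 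Beyond that, the only care required is in tracking the alternation between $S_n$- and $S_{n-1}$-invariants and in applying the Serre-duality shift in the correct argument.
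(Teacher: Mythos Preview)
Your proposal is correct and follows the paper's own argument essentially verbatim: the paper's ``proof'' of Theorem \ref{extformulakummer} is simply the phrase ``Thus, we have proved the following,'' with all the work contained in the preceding discussion---namely Lemma \ref{BKRtype}, the reduction to the zeroth term of $\bar{\cK}^\bullet$ via Scala--Krug, the identities \eqref{F'F''}--\eqref{F''F''}, and their evaluation through Corollaries \ref{firstsummand} and \ref{secondsummand}. Your account of the $[2]$-shift via Serre duality on $A$ and of the fourth formula via $\Psi_K(\cO_{K_n})\simeq\cO_N$ fills in exactly the small points the paper leaves implicit.
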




As before, we can use adjunctions and the Yoneda lemma to determine the kernels of the four compositions:
\begin{table}[htbp!]
 \centering
 \resizebox{\columnwidth}{!}{
  \begin{tabular}{| c | c |}
    \hline
     & \\
    \multirow{2}{*}{$R_K''F_K' \simeq \cO_{A\times A}\otimes H^*(K_{n-1},\cO_{K_{n-1}})$} & \multirow{2}{*}{$R_K'F_K' \simeq \cO_{A\times A}\otimes H^*(K_n,\cO_{K_n})[2]$} \\
     & \\
     & \\
    \hline
    \multirow{2}{*}{$R_K''F_K'' \simeq \cO_{\Delta}\otimes H^*(K_{n-1},\cO_{K_{n-1}})$} & \\
     & \multirow{2}{*}{$R_K'F_K'' \simeq \cO_{A\times A}\otimes H^*(K_{n-1},\cO_{K_{n-1}})[2]$} \\
    \hspace{2.1cm}\multirow{2}{*}{$\oplus\; \cO_{A\times A} \otimes H^*(K_{n-2},\cO_{K_{n-2}})$} & \\
     & \\
    \hline
  \end{tabular}}
 \end{table}

 

Similar arguments to those in the Hilbert scheme case show that the induced map $\Psi_K(F_K'(\cF))\ra\Psi_K(F_K''(\cF))$ is again realised as the sum of evaluation maps. Thus, 
the induced map $\Ext^*(F_K'(\cE),F_K'(\cF))\ra\Ext^*(F_K'(\cE),F_K''(\cF))$, under the isomorphism of Theorem \ref{extformulakummer}, is given by
 \begin{align*}
 H^*(\cE)^\vee\otimes H^*(\cF)\otimes H^*(\cO_N)^{\frS_{n+1}} &\ra H^*(\cE)^\vee\otimes H^*(\cF)\otimes H^*(\cO_{N_{n-1}})^{\frS_{n}}\\ e\otimes f\otimes(s_1\otimes\cdots \otimes s_{n+1}) &\mapsto \sum_i e\otimes f(s_i)\otimes s_1\otimes\cdots \otimes \what{s_i}\otimes\cdots \otimes s_{n+1}
 \end{align*}
 and similarly for the other maps. Now, if $0\neq v\in H^2(A,\cO_A)$ then a basis of the $\frS_{n+1}$-invariants of $H^*(N,\cO_N)\simeq H^*(A^n,\cO_{A^n})\simeq H^*(A,\cO_A)^{\otimes n}$ is given by elements of the form $v^k\id^{n-k}$ for $k=0,\ldots,n$ and we see that the components 
\[H^*(\cE)^\vee\otimes H^*(\cF)\otimes v^k\id^{n-k} \to H^*(\cE)^\vee\otimes H^*(\cF)\otimes v^k\id^{n-k-1}\] 
of $\Ext^*(F'(\cE),F'(\cF))\ra\Ext^*(F'(\cE),F''(\cF))$ are again isomorphisms. By duality, the components of the other maps are isomorphisms as well and so we can cancel the direct summands from the cones just as before and use Yoneda to get 
\[R_KF_K\simeq\bigoplus_{k=0}^{n-1}\cO_\Delta[-2k].\]
 
 The last piece of this technical jigsaw is the monad structure. Inspecting the proof of \cite[Theorem 3.17]{krug2014extension}, we can see that the first summand $\bigoplus_{k=0}^n\Ext^*(\cE,\cF)[-2k]$ of $\Ext^*(F''(\cE),F''(\cF))$ corresponds precisely to the summand $\Ext^*(\pi_{n+1}^*\cE,\pi_{n+1}^*\cF)^{\frS_{n}}$ of $\Ext^*(C(\cE),C(\cF))^{\frS_{n+1}}:=\Ext^*(\bigoplus_i\pi_i^*\cE, \bigoplus_i\pi_i^*\cF)^{\frS_{n+1}}$. 
 Thus, the adjoint pair $\pi_{n+1}^*\dashv\pi_{n+1*}$ allows us to identify $RF$ with $\pi_{n+1*}\pi_{n+1}^*\simeq
 \id_X\otimes H^*(X^{[n]},\cO_{X^{[n]}})$ where the monad structure is given by cup product. Similarly, if we replace $\pi_i$ by $\bar{\pi}_i:=\pi_i\circ j:N\to A$ where $N\simeq A^n$, then we can use $\bar{\pi}_{n+1}^*\dashv\bar{\pi}_{n+1*}$ to identify $R_KF_K$ with $\bar{\pi}_{n+1*}\bar{\pi}_{n+1}^*\simeq
 \id_A\otimes H^*(K_{n-1},\cO_{K_{n-1}})$ which follows from Corollary \ref{firstsummand}.
\medskip

 In summary, we have demonstrated independent proofs of \cite[Theorem 2]{addington2011new} and Theorem \ref{kummer} above on the equivariant sides of the BKR equivalences.
 
\section{Final Remarks}
By \cite[Theorem 0.2]{yoshioka2001moduli}, we know that the moduli space $\cK:=\cK_H(v)$ of $H$-stable sheaves on an abelian surface with primitive Mukai vector $v$, trivial determinant and trivial determinant of the Fourier-Mukai transform (with respect to the Poincar\'e bundle) is deformation equivalent to $K_{v^2/2-1}$ and so we expect Theorem \ref{kummer} to hold in much more generality. More precisely, if $F:\cD(A)\ra\cD(\cK)$ is the Fourier-Mukai functor induced by the universal sheaf $\cU$ on $A\times\cK_H(v)$ and $R$ is its right adjoint then we expect the kernel of $RF$ to be given by $\bigoplus_{i=0}^{\dim\cK-1}\cO_\Delta[-2i]$ and the monad structure $RFRF\srel{\mu}{\ra} RF$ to be like multiplication in the graded ring $H^*(\bbP^{\dim\cK-1},\bbC)$; cf. \cite[Conjecture on p.2]{addington2011new}. This would provide new autoequivalences of $\cD(\cK)$.
 
A first step towards establishing this conjecture can be taken by using an idea of Markman \& Mehrotra \cite{markman2011integral}. In particular, their arguments allow us to show that Mukai's homomorphism $\theta_v:v^\perp\xra\sim H^2(\cK_H(v),\bbZ)$ \cite[Theorem 0.2]{yoshioka2001moduli} is equivalent to C\u ald\u araru's $F^\dagger:HH_2(\cK)\xra\sim HH_2(A)$ \cite[Definition 5.2]{caldararu2003mukai} which factors through $\Hom(RF,\id_A)\xra{\circ\eta}HH_2(A)$ and hence splits the unit map $\eta:\id_A\to RF$. That is, $\cO_\Delta$ is a direct summand of the kernel of $RF$ which implies $F$ is faithful. We expect it should be possible to `bootstrap' this argument and determine a complete description of $RF$ in line with the suggestion above. 
 

Apart from Hilbert schemes of points $S^{[n]}$ on K3 surfaces $S$ and generalised Kummer varieties $K_n$ associated to abelian surfaces $A$ (together with deformations thereof), the only other compact hyperk\"ahler varieties known to us are O'Grady's sporadic examples of dimension ten and six; see \cite{OGrady1999desingularized} and \cite{OGrady2003new}. Lehn \& Sorger \cite{lehn2006la} show that if $v$ is a primitive Mukai vector with $v^2=2$ then the moduli space $\cM:=\cM_H(2v)$ (resp. $\cK:=\cK_H(2v)$) of $H$-stable sheaves on a K3 surface $S$ (resp. abelian surface $A$) admits a symplectic resolution $\pi:\widetilde{\cM}\to\cM$ (resp. $\pi:\widetilde{\cK}\to\cK$) which is obtained by blowing up the (reduced) singular locus $\Sym^2\cM_H(v)$ (resp. $\Sym^2\cK_H(v)$). In this case, Perego \& Rapagnetta \cite{perego2013deformation} show that $\widetilde\cM$ and $\widetilde\cK$ are deformation equivalent to O'Grady's ten and six dimensional example respectively. It is natural at this point to ask if we can construct $\bbP^n$-functors for the O'Grady spaces $\widetilde\cM$ and $\widetilde\cK$. 


The hyperk\"ahler Strominger-Yau-Zaslow conjecture \cite[Conjecture 1.2]{verbitsky2010hyperkahler} states that every hyperk\"ahler manifold can be deformed into a hyperk\"ahler manifold which admits a lagrangian fibration. Therefore, one could also investigate whether there is a natural $\bbP^n$-functor associated to a lagrangian fibration $\pi:X\to\bbP^n$.
 
\bibliographystyle{alpha}
\bibliography{ref}

\begin{thebibliography}{BKR01}

\bibitem[Add11]{addington2011new}
Nicolas Addington.
\newblock New derived symmetries of some hyperk{\"a}hler varieties.
\newblock {\em Arxiv preprint arXiv:1112.0487}, 2011.

\bibitem[BB13]{bayer2013derived}
Arend Bayer and Tom Bridgeland.
\newblock Derived automorphism groups of {K}3 surfaces of {P}icard rank 1.
\newblock {\em Arxiv preprint arXiv:1310.8266}, 2013.

\bibitem[Bea83]{beauville1983varietes}
Arnaud Beauville.
\newblock Vari\'et\'es {K}\"ahleriennes dont la premi\`ere classe de {C}hern
  est nulle.
\newblock {\em J. Differential Geom.}, 18(4):755--782 (1984), 1983.

\bibitem[BKR01]{bridgeland2001mckay}
Tom Bridgeland, Alastair King, and Miles Reid.
\newblock The {M}c{K}ay correspondence as an equivalence of derived categories.
\newblock {\em J. Amer. Math. Soc.}, 14(3):535--554 (electronic), 2001.

\bibitem[BO95]{bondal1995semiorthogonal}
Alexei Bondal and Dmitri Orlov.
\newblock Semiorthogonal decompositions for algebraic varieties.
\newblock {\em Arxiv preprint arXiv:9506012}, 1995.

\bibitem[BO01]{bondal2001reconstruction}
Alexei Bondal and Dmitri Orlov.
\newblock Reconstruction of a variety from the derived category and groups of
  autoequivalences.
\newblock {\em Compositio Math.}, 125(3):327--344, 2001.

\bibitem[Bri08]{bridgeland2008stability}
Tom Bridgeland.
\newblock Stability conditions on {$K3$} surfaces.
\newblock {\em Duke Math. J.}, 141(2):241--291, 2008.

\bibitem[C{\u{a}}l03]{caldararu2003mukai}
Andrei C{\u{a}}ld{\u{a}}raru.
\newblock The {M}ukai pairing, {I}: the {H}ochschild structure.
\newblock {\em Arxiv preprint arXiv:0308079}, 2003.

\bibitem[Che02]{chen2002flops}
Jiun-Cheng Chen.
\newblock Flops and equivalences of derived categories for threefolds with only
  terminal {G}orenstein singularities.
\newblock {\em J. Differential Geom.}, 61(2):227--261, 2002.

\bibitem[DW13]{donovan2013noncommutative}
Will Donovan and Michael Wemyss.
\newblock Noncommutative deformations and flops.
\newblock {\em Arxiv preprint arXiv:1309.0698}, 2013.

\bibitem[ES98]{ellingsrud1998an}
Geir Ellingsrud and Stein~Arild Str{\o}mme.
\newblock An intersection number for the punctual {H}ilbert scheme of a
  surface.
\newblock {\em Trans. Amer. Math. Soc.}, 350(6):2547--2552, 1998.

\bibitem[Hai01]{haiman2001hilbert}
Mark Haiman.
\newblock Hilbert schemes, polygraphs and the {M}acdonald positivity
  conjecture.
\newblock {\em J. Amer. Math. Soc.}, 14(4):941--1006 (electronic), 2001.

\bibitem[HT06]{huybrechts2006pobjects}
Daniel Huybrechts and Richard Thomas.
\newblock {$\mathbb P$}-objects and autoequivalences of derived categories.
\newblock {\em Math. Res. Lett.}, 13(1):87--98, 2006.

\bibitem[Huy06]{huybrechts2006fourier}
Daniel Huybrechts.
\newblock {\em Fourier-{M}ukai transforms in algebraic geometry}.
\newblock Oxford Mathematical Monographs. The Clarendon Press Oxford University
  Press, Oxford, 2006.

\bibitem[Kaw02]{kawamata2002dequivalence}
Yujiro Kawamata.
\newblock {$D$}-equivalence and {$K$}-equivalence.
\newblock {\em J. Differential Geom.}, 61(1):147--171, 2002.

\bibitem[Kru13]{krug2013new}
Andreas Krug.
\newblock New derived autoequivalences of {H}ilbert schemes and generalized
  {K}ummer varieties.
\newblock {\em Arxiv preprint arXiv:1301.4970}, 2013.

\bibitem[Kru14]{krug2014extension}
Andreas Krug.
\newblock Extension groups of tautological sheaves on {H}ilbert schemes.
\newblock {\em J. Algebraic Geom.}, 23(3):571--598, 2014.

\bibitem[Kuz06]{kuznetsov2006hyperplane}
Alexander Kuznetsov.
\newblock Hyperplane sections and derived categories.
\newblock {\em Izv. Ross. Akad. Nauk Ser. Mat.}, 70(3):23--128, 2006.

\bibitem[LS06]{lehn2006la}
Manfred Lehn and Christoph Sorger.
\newblock La singularit\'e de {O}'{G}rady.
\newblock {\em J. Algebraic Geom.}, 15(4):753--770, 2006.

\bibitem[MM11]{markman2011integral}
Eyal Markman and Sukhendu Mehrotra.
\newblock Integral transforms and deformations of {K}3 surfaces.
\newblock {\em In preparation}, 2011.

\bibitem[Nam03]{namikawa2003mukai}
Yoshinori Namikawa.
\newblock Mukai flops and derived categories.
\newblock {\em J. Reine Angew. Math.}, 560:65--76, 2003.

\bibitem[O'G99]{OGrady1999desingularized}
Kieran O'Grady.
\newblock Desingularized moduli spaces of sheaves on a {$K3$}.
\newblock {\em J. Reine Angew. Math.}, 512:49--117, 1999.

\bibitem[O'G03]{OGrady2003new}
Kieran O'Grady.
\newblock A new six-dimensional irreducible symplectic variety.
\newblock {\em J. Algebraic Geom.}, 12(3):435--505, 2003.

\bibitem[Orl02]{orlov2002derived}
Dmitri Orlov.
\newblock Derived categories of coherent sheaves on abelian varieties and
  equivalences between them.
\newblock {\em Izv. Ross. Akad. Nauk Ser. Mat.}, 66(3):131--158, 2002.

\bibitem[PR13]{perego2013deformation}
Arvid Perego and Antonio Rapagnetta.
\newblock Deformation of the {O}'{G}rady moduli spaces.
\newblock {\em J. Reine Angew. Math.}, 678:1--34, 2013.

\bibitem[Sca09a]{scala2009cohomology}
Luca Scala.
\newblock Cohomology of the {H}ilbert scheme of points on a surface with values
  in representations of tautological bundles.
\newblock {\em Duke Math. J.}, 150(2):211--267, 2009.

\bibitem[Sca09b]{scala2009some}
Luca Scala.
\newblock Some remarks on tautological sheaves on {H}ilbert schemes of points
  on a surface.
\newblock {\em Geom. Dedicata}, 139:313--329, 2009.

\bibitem[ST01]{seidel2001braid}
Paul Seidel and Richard Thomas.
\newblock Braid group actions on derived categories of coherent sheaves.
\newblock {\em Duke Math. J.}, 108(1):37--108, 2001.

\bibitem[Tod07]{toda2007on}
Yukinobu Toda.
\newblock On a certain generalization of spherical twists.
\newblock {\em Bull. Soc. Math. France}, 135(1):119--134, 2007.

\bibitem[Ver10]{verbitsky2010hyperkahler}
Misha Verbitsky.
\newblock Hyper{K}\"ahler {SYZ} conjecture and semipositive line bundles.
\newblock {\em Geom. Funct. Anal.}, 19(5):1481--1493, 2010.

\bibitem[Yos01]{yoshioka2001moduli}
K{\=o}ta Yoshioka.
\newblock Moduli spaces of stable sheaves on abelian surfaces.
\newblock {\em Math. Ann.}, 321(4):817--884, 2001.

\end{thebibliography}
\end{document}